\newcommand\RR {\mathbb{R}}
\newcommand\NN {\mathbb{N}}
\newcommand\ZZ {\mathbb{Z}}
\newcommand\CC {\mathbb{C}}
\newcommand{\abs}[1]{\left |#1\right|}
\newcommand{\norm}[1]{\left \lVert#1 \right \rVert}
\newcommand{\partialDerivative}[2]{\frac{\partial{#1}} {\partial{#2}} }
\theoremstyle{plain}
\newtheorem{theorem}{Theorem}
\newtheorem{lemma}{Lemma}
\theoremstyle{remark}
\newtheorem{remark}{Remark}
\title{Rapidly convergent quasi-periodic Green functions for
  scattering by arrays of cylinders---including Wood anomalies}
\author{Oscar P. Bruno\footnote{Computing and Mathematical Sciences,
    Caltech, Pasadena, CA 91125, USA} \and Agustin G. Fernandez-Lado$^*$}
\date{}
\begin{document}
\maketitle
\begin{abstract}
  This paper presents a full-spectrum Green function methodology
  (which is valid, in particular, at and around Wood-anomaly
  frequencies) for evaluation of scattering by periodic arrays of
  cylinders of arbitrary cross section---with application to wire
  gratings, particle arrays and reflectarrays and, indeed, general
  arrays of conducting or dielectric bounded obstacles under both TE
  and TM polarized illumination. The proposed method, which, for
  definiteness is demonstrated here for arrays of perfectly conducting
  particles under TE polarization, is based on use of the shifted
  Green-function method introduced in the recent contribution (Bruno
  and Delourme, Jour. Computat. Phys. pp. 262--290 (2014)). A certain
  infinite term arises at Wood anomalies for the cylinder-array
  problems considered here that is not present in the previous
  rough-surface case. As shown in this paper, these infinite terms can
  be treated via an application of ideas related to the
  Woodbury-Sherman-Morrison formulae. The resulting approach, which is
  applicable to general arrays of obstacles even at and around
  Wood-anomaly frequencies, exhibits fast convergence and high
  accuracies. For example, a few hundreds of milliseconds suffice for
  the proposed approach to evaluate solutions throughout the
  resonance region (wavelengths comparable to the period and cylinder
  sizes) with full single-precision accuracy.
\end{abstract}

\section{Introduction}
\label{sec:Introduction}
We consider the problem of scattering of a monochromatic plane wave by
a periodic array of cylinders of general cross section. We approach
this problem by means of the methodology introduced
in~\cite{BrunoDelourme} which, based on use of a certain shifted Green
function, provides a solver for problems of scattering by periodic
surfaces which is valid and accurate throughout the
spectrum---including Wood frequencies~\cite{Rayleigh,Wood}, at which
the classical quasi-periodic Green function ceases to exist

A variety of approaches have been used to tackle this important
problem including, notably, methods based on use of integral
equations;
cf.~\cite{BarnettGreengard1,BarnettGreengard2,BrunoDelourme,BrunoHaslam1,BrunoHaslam2}
and references therein. The success of the integral-equation approach
results from its inherent dimensionality reduction (only the boundary
of the domain needs to be discretized) and associated automatic
enforcement of radiation conditions.

For the sake of simplicity the methodology presented in this article
assumes perfectly conducting obstacles under TE polarization. The
method can be easily extended to TM polarization and dielectric
cylinders: the dielectric case does not give rise to additional
difficulties, and the hyper-singular operators that arise in the TM case
of polarization can be handled by means of existing regularization
techniques (see e.g.~\cite{BrunoEllingTurc} and references therein). The main
strategy developed in the present paper can thus be applied in those
contexts without significant modifications.

As is well known, classical expansions for quasi-periodic Green
functions converge extremely slowly, and they of course completely
fail to converge at Wood anomalies. A number of methods have been
introduced to tackle the slow-convergence difficulty, including the
well known Ewald summation method for two and three dimensional
problems~\cite{Arens,Capolino,Linton1,Linton2} and many other
contributions~\cite{ChandlerWilde,Dienstfrey,KurkcuReitich,MathisPeterson,Moroz,NicoroviciMcPhedran1,NicoroviciMcPhedran2}. Unfortunately,
however, none of these methods resolve the difficulties posed by Wood
anomalies. Recently, a new quasi-periodic Green function was
introduced~\cite{BrunoDelourme} for the problem of scattering by
periodic surfaces which, relying on use of certain linear combinations
of shifted free-space Green functions (which amount to discrete
finite-differencing of the Green functions) can be used to produce
arbitrary (user-prescribed) algebraic convergence order for
frequencies throughout the spectrum, including Wood
frequencies~\cite{Rayleigh,Wood}.

A straightforward application of this procedure leads to an operator
equation that contains denominators which tend to zero as a Wood
anomaly is approached. To remedy this situation a strategy based on
use of the Woodbury-Sherman-Morrison formulae is introduced which
completely regularizes the problem and provides a limiting solution as
Wood frequencies are approached. To our knowledge, this is the first
approach ever presented that is applicable to problems of scattering
by periodic arrays of bounded obstacles at Wood anomalies on the basis
of quasi-periodic Green functions. It is worth mentioning that an
alternative method, not based on the use of quasi-periodic Green
functions and which is also applicable at Wood anomalies, was proposed
in~\cite{BarnettGreengard1,BarnettGreengard2}. In that approach, whose
generalization to corresponding three-dimensional problems at Wood
anomalies has not been provided, the quasi-periodicity is enforced
through use of auxiliary layer potentials on the boundaries of the
periodic cell. As suggested by the treatment~\cite{BrunoShipman} for
the problem of scattering by bi-periodic surfaces in three-dimensions,
the shifted Green function approach can be extended to
three-dimensional problems without difficulty.

In order to demonstrate the character of the new approach we present
numerical methods based on use of a combination of three main
elements: the half-space quasi-periodic Green function, the smooth
windowing methodology~\cite{BrunoShipman,BrunoDelourme,Monro} (which
gives rise to super-algebraically fast convergence away from Wood
anomalies) and high-order quadratures for singular
integrals~\cite{kressLIE,Kussmaul,Martensen,kressInverse}.  As shown
by means of a variety of numerical results, highly accurate solvers
result from this strategy---even at and around Wood anomalies.  As can
be seen in Section~\ref{sec:numericalResults} the present approach can
solve the complete scattering problem for scatterers of arbitrary
shape even at anomalous configurations, in fast computing
times.

The remainder of this article is organized as
follows. Section~\ref{sec:Preliminaries} presents necessary background
on the problem of scattering by periodic
media. Section~\ref{sec:ShiftedGreenFunction} summarizes the
convergence properties of the shifted quasi-periodic Green function
approximation introduced in~\cite{BrunoDelourme}, and
Section~\ref{sec:formul} then presents an associated integral-equation
formulation for the solution of the problem of wave scattering by a
periodic array of bounded obstacles at all frequencies.  The actual
numerical implementation we propose, which relies, in addition, on the
aforementioned smooth windowing methodology, is presented in
Section~\ref{sec:numericalAlgorithm} (the super-algebraic convergence
of the smooth windowing methodology at non-anomalous configurations
has been established in~\cite{BrunoShipman} for bi-periodic structures
in three dimensional space; the same ideas can be applied to establish
the result for a linear periodic array in two-dimensional case).
Section~\ref{sec:numericalResults}, finally, presents a variety of
numerical results demonstrating the properties of the overall
approach.
\section{Preliminaries}
\label{sec:Preliminaries}
\subsection{Scattering by a periodic array of bounded obstacles}\label{sec:array_bdd}
We consider problems of scattering by a one-dimensional
perfectly-conducting diffraction grating of period $L$ ($L>0$) in
two-dimensional space under TE polarization. The scatterer is assumed
to equal a union of the form
\begin{equation}\label{array}
  D=\bigcup_{n = -\infty}^\infty D_n
\end{equation}
of mutually disjoint closed sets $D_n$, where $D_0$ equals a union of
a finite number of non-overlapping connected bounded components, and
where $D_n=D_0+n L \hat{x}$, ($\hat{x} = (1,0)$, $n\in \ZZ$); see
Figure~\ref{problemGeometry}.  We assume the grating is illuminated by
a plane-wave of spatial frequency $k>0$ and incidence angle $\theta$
(measured from the $y$-axis). The scattered wave $u^{\mathrm{s}}$
satisfies the PDE
\begin{equation}\label{PDE}
\Delta u^\mathrm{s}+k^2u^\mathrm{s} = 0 \qquad \mathrm{in}\quad \Omega =\RR^2 \setminus D
\end{equation}
along with the condition of radiation at infinity (see
Remark~\ref{rem:Radiating}) and a Dirichlet-type boundary condition:
\begin{equation}\label{TE}
  u^{\mathrm{s}}=-u^{\mathrm{inc}}_k\quad\mbox{on}\quad \partial  D
\end{equation}
where, letting $\alpha=k\sin(\theta)$ and
$\beta=k\cos(\theta)$ we have set
$$u^{\mathrm{inc}}_k(x,y)=e^{i\alpha x-i\beta y}.$$ 
\begin{figure}[h!]
 \centering
    \includegraphics[scale=0.6]{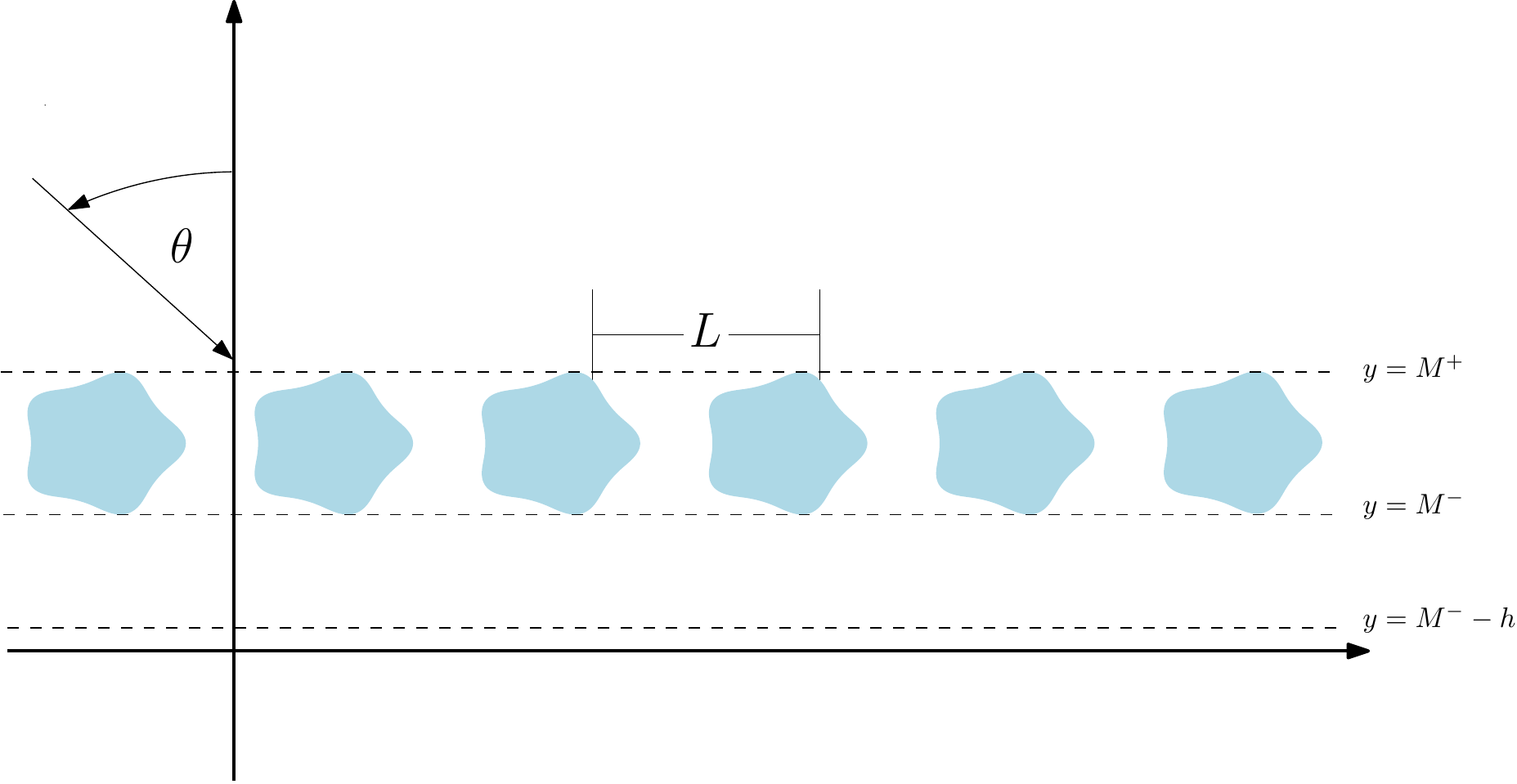}
    \caption{The physical problem: a plane wave with angle of
      incidence $\theta$ is scattered by an array of cylindrical
      obstacles distributed periodically in the $x$-direction. The
      particular star-shaped cylinder cross-section depicted here is
      given by the parametrization $C(t)= r(t) (\cos(t),\sin(t)) \quad
      \mbox{where} \quad
      r(t)=1+0.1\cos(5t)+0.01\cos(10t)$. \label{problemGeometry}}
\end{figure}

As is well known, the incoming and scattered waves $u^{\mathrm{inc}}_k$ and
$u^{\mathrm{s}}$ are $\alpha$ quasi-periodic functions, that is, they satisfy
the identities
$$u^{\mathrm{inc}}_k(x+nL,y)=e^{i\alpha nL }u_{\mathrm{inc},}(x,y)\quad\mbox{and}\quad u^{\mathrm{s}}(x+nL,y)=e^{i\alpha nL}u^{\mathrm{s}}(x,y).$$
It follows that, assuming
\begin{equation}\label{bounded}
  D \subset  \RR \times (M^-,M^+)
\end{equation} 
(see Figure~\ref{problemGeometry}) and letting $\alpha_n=\alpha+\frac{2\pi}{L}n$,
$\beta_n^2=k^2-\alpha_n^2$, ($\mathrm{Im}(\beta_n)\geq0$), the
solution $u^{\mathrm{s}}$ admits \textit{Rayleigh expansions}~\cite{petit} of the
form
\begin{equation}
  u^{\mathrm{s}}(x,y)=\sum \limits_{n\in \ZZ} A_n^\pm e^{i\alpha_n x +i\beta_n y}+B_n^\pm e^{i\alpha_n x-i\beta_n y}.
\end{equation}
\begin{remark}
\label{rem:Radiating}
The scattered field consists of a superposition of plane waves that
propagate away from $D$ and which remain bounded in the far field; a
scattered solution having this property is called
\textit{radiating}. In the present context we impose such a radiation
condition by requiring that~\cite{petit}
\begin{equation}\label{rayleigh_plus}
  u^{\mathrm{s}}(x,y)=\sum \limits_{n\in \ZZ} C_n^+ e^{i\alpha_n x +i\beta_n y}, \quad y>M^+,\quad\mbox{and}
\end{equation}

\begin{equation}\label{rayleigh_minus}
  u^{\mathrm{s}}(x,y)=\sum \limits_{n\in \ZZ} C_n^- e^{i\alpha_n x-i\beta_n y}, \quad y<M^-.
\end{equation}

\end{remark}

\begin{remark}\label{rem:WA_def}
  Throughout this paper $\mathcal{N}$ denotes the finite subset of
  integers for which $k^2-\alpha_n^2>0$. For $n \in \mathcal{N}$ the
  functions $e^{i\alpha_n x+i\beta_n y}$ and $e^{i\alpha_nx-i\beta_n
    y}$ are outwardly propagating waves (above and below $D$,
  respectively). If $k^2-\alpha_n^2<0$, the modes $e^{i\alpha_n x +
    i\beta_n y}$ (resp. $e^{i\alpha_n x - i\beta_n y}$) are evanescent
  waves, i.e., they decrease exponentially as $y\to +\infty$ (resp. $y
  \to -\infty$). A scattering setup for which $k^2=\alpha_{n_0}^2$ for
  some $n_0\in \ZZ$ is called a \textit{Wood anomaly configuration} and the wavenumber $k$ will be referred as a \textit{Wood frequency} or \textit{Wood anomaly frequency};
  note that, in such cases the plane wave $e^{i\alpha_{n_0} x \pm
    i\beta_{n_0} y}$ travels in directions parallel to the obstacle
  array. For fixed $L$ and $\theta$ we will consider the set
\begin{equation}\label{KWA}
\mathcal{K}_{\mathrm{wa}} = \mathcal{K}_{\mathrm{wa}}(L,\theta) =  \{ k>0 :\, \exists\, n\in \ZZ \text{ such that }  k^2=\alpha_n^2\}
\end{equation} 
of all Wood frequencies for a given period $L$ and incidence angle
$\theta$.  Clearly, for each
$k_0 \in \mathcal{K}_{\mathrm{wa}}$ the set
\begin{equation}
\mathcal{N}_{\mathrm{wa}}(k_0)=\left\{n\in \ZZ: \beta_n(k_0)=0 \right\}
\end{equation}
has at least one and at most two elements: either
$\mathcal{N}_{\mathrm{wa}} = \{ n_1\}$ or $\mathcal{N}_{\mathrm{wa}} =
\{ n_1,n_2\}$. For conciseness, throughout this paper we will write
$\mathcal{N}_{\mathrm{wa}}= \{ n_1,\dots,n_r\}$ with $r=1$ or $r=2$,
as appropriate, with corresponding expressions such as $\sum_{j=1}^r$,
etc.---in spite of the fact that the set and the sum, etc., can only
have one or two elements.
\end{remark}

The \textit{energy balance} relation for the type of geometrical
arrangement under consideration, which follows easily from
consideration of Green's identities (cf.~\cite{petit}), is given by
\begin{equation}\label{engy_bal_eqn}
\mathrm{Re}(C_0^-)+\sum_{n\in \mathcal{N}} e_n^+ + e_n^-=0.
\end{equation}	
where
\begin{equation}
  e_n^\pm=\abs{C_n^\pm}^2\frac{\beta_n}{\beta}\quad n \in \mathcal{N}.
\end{equation}

\subsection{Classical quasi-periodic Green function}

Given $k\not \in \mathcal{K}_{\mathrm{wa}}$, the classical quasi-periodic Green function is defined by 
\begin{equation}
\label{classicalQuasiGreen}
  G^q_{0,k}(X,Y)=\sum \limits_{n\in \ZZ}e^{-i\alpha n L} G_{0,k}(X+nL,Y)
\end{equation}
where $G_{0,k}$ denotes the free space Green function for the Helmholtz equation 
\begin{equation}
  G_{0,k}(X,Y)=\frac{i}{4}H_0^{(1)}(k\sqrt{X^2+Y^2})
\end{equation}
and $H_0^{(1)}$ is the zero-order Hankel function of the first
kind. (The subindex ``0'' in $G_{0,k}$ and $G^q_{0,k}$ indicates that
the number of ``shifts'' used is zero;
cf. Section~\ref{sec:ShiftedGreenFunction} below.) This infinite sum
converges at all points $(X,Y)\neq(md,0)$, $m\in \ZZ$ and, moreover,
the truncated series
\begin{equation*}
  \sum \limits_{n\in \ZZ, \abs{n}\geq 2} e^{-i\alpha n L} G_{0,k}(X+nL,Y)
\end{equation*}
converges uniformly over compact subsets of the plane not containing singularities of $G^q_{0,k}$ \cite{BrunoReitich}. 

The lack of convergence of the classical quasi-periodic Green
function~\eqref{classicalQuasiGreen} at Wood anomalies has historically presented some
of the most significant challenges in the solution of periodic
scattering problems. The following section introduces a modified
quasi-periodic Green function, namely the shifted quasi-periodic Green
function, which does not suffer from this difficulty.

\section{Shifted quasi-periodic Green function}
\label{sec:ShiftedGreenFunction}
In order to solve scattering problems at all frequencies we make use
of the shifted quasi-periodic Green function introduced
in~\cite{BrunoDelourme} which converges even at and around
Wood-anomaly frequencies. In this method, convergence at Wood
anomalies (and, indeed, fast convergence even away from Wood
anomalies) is achieved via addition of a number $j$ of new Green
function poles positioned outside the physical propagation domain,
along a certain given direction (e.g. vertically underneath the true
pole) and at distances $h$, $2h$, $\dots jh$ from the true Green
function pole. For example, taking $j=1$ the respective shifted Green function is
given by
\begin{equation}
G_{1,k}(X,Y)=G_{0,k}(X,Y)-G_{0,k}(X,Y+h).
\end{equation}
In order to appreciate the advantages that result from use of this
Green function we use the mean value theorem together with the
relation $(H_0^{(1)})^\prime=-H_1^{(1)}$ and obtain
\begin{equation}
  G_{1,k}(X,Y)=\frac{i}{4}\frac{hk(Y+\xi)}{\sqrt{X^2+(Y+\xi)^2}}H_1^{(1)}\left(k\sqrt{X^2+(Y+\xi)^2}\right)
\end{equation}
for some $\xi\in (0, h)$.  In view of the asymptotic expression
\begin{equation}
H_n^{(1)}(t)=\sqrt{\frac{2}{\pi t}} e^{i\left(t-n\frac{\pi}{2}-\frac{\pi}{4}\right)}\left\{ 1+O \left(\frac{1}{t}\right)\right\}, \quad  t \in \RR, \quad n \in \NN_0,
\end{equation}
it follows that there exists $C>0$ such that for large values of
$\abs{X}$ and bounded $\abs{Y}$ we obtain the enhanced decay
\begin{equation}
\abs{G_{1,k}(X,Y)} \leq \frac{C}{\abs{X}^{3/2}}.
\end{equation}

As shown in~\cite{BrunoDelourme}, Green functions with arbitrary
algebraic decay can be obtained by generalizing this idea: given $h>0$
and $j\in \NN$ the half-space shifted Green function $G_{j,k}$ containing
$j$ shifts is defined by ``finite-differencing'' Green functions
associated with various poles:
\begin{equation}\label{Gj_def}
  G_{j,k}(X,Y)=\frac{i}{4} \sum \limits_{\ell=0}^j (-1)^\ell {j \choose \ell} H_0^{(1)}\left(k\sqrt{X^2+(Y+\ell h)^2}\right)
\end{equation}
for $(X,Y) \in \RR^2$, $(X,Y) \neq (0,\ell h), \, \ell=0 \dots
j$. (Note the $j$-th order finite-difference coefficients $(-1)^l {j
  \choose \ell}$ in equation~\eqref{Gj_def}). The fast decay of this
function is established by the following lemma whose proof can be
found in~\cite{BrunoDelourme}.
\begin{lemma}
  Let $j\in \NN$, $h>0$, $\ell=0$ or $\ell=1$, $m=1$ or $m=2$ and $k>0$ be
  given. Then, for each $M>0$ there exists a positive constant $C_M$
  (that also depends on $j,k$ and $h$) such that for all $y \in
  (-M,M)$ and for all real numbers $X$ with $\abs{X}>1$ we have
\begin{equation}
\abs{\partial_m^\ell G_{j,k}(X,Y)} \leq 
	\begin{cases}
		{C_M\abs{X}^{-\frac{j+1}{2}}}  &  j \text{ is even}\\
		{C_M \abs{X}^{-\frac{j}{2}-1}} & j \text{ is odd}
	\end{cases}
\end{equation}
where $\partial_m^\ell$ denotes differentiation of order $\ell$ in the $m$-th coordinate direction.
\end{lemma}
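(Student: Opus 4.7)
The plan is to combine the large-argument asymptotic expansion of the Hankel function $H_0^{(1)}$ with the polynomial-annihilation property of the $j$-th order forward finite difference
$$\Delta^j g := \sum_{\ell=0}^{j}(-1)^{\ell}\binom{j}{\ell}g(\ell),$$
which vanishes identically on every polynomial in $\ell$ of degree strictly less than $j$. The parity split in the statement will arise naturally from the fact that $\ell$ enters the problem only through the even-degree polynomial $(Y+\ell h)^2$.

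First I would insert into~\eqref{Gj_def} the Taylor expansion
$$R_\ell := \sqrt{X^2+(Y+\ell h)^2} = |X|+\frac{(Y+\ell h)^2}{2|X|}+O(|X|^{-3}),$$
valid uniformly for $Y\in(-M,M)$, $\ell\in\{0,\dots,j\}$ and $|X|>1$, together with the standard large-argument asymptotic series of $H_0^{(1)}$. Combining these and Taylor-expanding $e^{ik(R_\ell-|X|)}$ in powers of $1/|X|$ yields, after extracting all $\ell$-independent factors, a representation
$$H_0^{(1)}(kR_\ell) = c\,|X|^{-1/2}e^{ik|X|}\Bigl(\sum_{q=0}^{Q}P_q(Y+\ell h)\,|X|^{-q}+E_Q(X,Y,\ell)\Bigr),$$
with $c=c(k)$ a constant, $P_q$ a polynomial of degree exactly $2q$ in its argument, and $|E_Q|\le C_{M,Q}|X|^{-(Q+1)}$ uniformly on the relevant parameter range.

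Second I would apply $\Delta^j$ in $\ell$ term by term. Polynomial annihilation kills every summand with $2q<j$, leaving only $q\ge\lceil j/2\rceil$; the surviving contribution with the smallest power of $|X|^{-1}$ is that of $q=\lceil j/2\rceil$, which is of size $|X|^{-\lceil j/2\rceil}$. Choosing $Q\ge\lceil j/2\rceil$ ensures that $|\Delta^j E_Q|\le 2^j C_{M,Q}|X|^{-(Q+1)}$ is dominated by this principal bound. Combining with the overall $|X|^{-1/2}$ prefactor yields
$$|G_{j,k}(X,Y)|\le C_M|X|^{-1/2-\lceil j/2\rceil},$$
which equals $C_M|X|^{-(j+1)/2}$ for $j$ even (when $\lceil j/2\rceil=j/2$) and $C_M|X|^{-j/2-1}$ for $j$ odd (when $\lceil j/2\rceil=(j+1)/2$), matching the two cases of the lemma.

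The first-order derivative estimates follow by differentiating the representation once. A $\partial_X$-derivative multiplies each Hankel term by $X/R_\ell$, whose $1/|X|$ expansion has polynomial-in-$\ell$ coefficients of \emph{even} degree, so the structure underlying the polynomial-annihilation argument is preserved verbatim. A $\partial_Y$-derivative brings down a factor $(Y+\ell h)/R_\ell$, which adds one to the polynomial degree (converting even into odd) but simultaneously introduces an additional $|X|^{-1}$ prefactor; a direct parity count then shows that the resulting bound is no worse than the one for $G_{j,k}$ itself. The main technical obstacle throughout is the uniform bookkeeping of the remainder $E_Q$: all hidden constants must be controlled uniformly for $Y\in(-M,M)$, which is tedious but routine. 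Once this is arranged, the polynomial-annihilation mechanism combined with the even-degree structure of $(Y+\ell h)^2$ immediately produces the parity split in the lemma's statement.
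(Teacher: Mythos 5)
Your argument is correct, and it is essentially the proof of this result: the paper itself does not prove the lemma but defers to the cited reference (Bruno and Delourme), where the estimate is obtained by exactly the mechanism you describe --- inserting the large-argument asymptotic expansion of $H_0^{(1)}(k\sqrt{X^2+(Y+\ell h)^2})$ in powers of $|X|^{-1}$, observing that the coefficient of $|X|^{-q}$ is a polynomial of degree at most $2q$ in $Y+\ell h$ (hence in $\ell$), and letting the $j$-th order finite difference annihilate all terms with $2q<j$, which yields the exponent $-\tfrac12-\lceil j/2\rceil$ and hence the stated parity split. Your treatment of the $\partial_Y$ case (odd polynomial degree compensated by the extra $|X|^{-1}$ from $(Y+\ell h)/R_\ell$) and of the uniform remainder control is also the right bookkeeping; the only routine point left implicit is that the region $1<|X|\le X_0$, where the asymptotic expansion need not yet apply, is handled by absorbing a bounded quantity into $C_M$.
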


The corresponding shifted quasi-periodic Green function is thus defined by
\begin{equation}
\label{quasiPerGj}
  G_{j,k}^q(X,Y) = \sum \limits_{n \in \ZZ} e^{-i\alpha n L} G_{j,k}\left(X+nL,Y\right).
\end{equation}
The fast convergence of such series, which follows from the previous
lemma, is laid out in the following theorem.

\begin{theorem}
\label{thm:shiftedGreenFunctionDecay}
  Let $j\in \NN$, $h>0$, $\ell=0$ or $\ell=1$, $m=1$ or $m=2$ and $k_0>0$ be given. Then, for each $M>0$ and $\delta>0$, there exists a constant $D_{M,\delta}>0$ (that also depends on $j,k_0$ and $h$) such that, for all $X,Y$ satisfying $-L \leq X \leq L$ and $-M <Y <M$, for all frequencies $k>0$ such that $\abs{k-k_0}<\delta$, and for all integers $N>1$, we have
\begin{equation}
  \abs{\sum \limits_{n\in\ZZ: \abs{n}\geq N} e^{-i\alpha n L}\partial_m^\ell G_{j,k}(X+nL,Y)} \leq 
	\begin{cases}
		{D_{M,\delta}\abs{N}^{-\frac{j-1}{2}}}  &  j \text{ is even}\\
		{D_{M,\delta} \abs{N}^{-\frac{j}{2}}} & j \text{ is odd}
	\end{cases}.
\end{equation}
It follows that for $\ell=0$ and $\ell=1$ with $m=1$ or $m=2$:
\begin{enumerate}
\item The finite sums $\sum \limits_{\abs{n}\leq N} e^{-i\alpha n L}\partial_m^\ell G_{j,k}(X+nL,Y)$ converge as $N \to \infty$ to the corresponding quantities $\partial_m^\ell G^q_{j,k}$.
\item The corresponding approximation errors decrease at least as fast as $\abs{N}^{-\frac{j-1}{2}}$ if $j$ is even and as fast as $\abs{N}^{-\frac{j}{2}}$ if $j$ is odd.
\end{enumerate}
\end{theorem}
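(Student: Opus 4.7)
The plan is to reduce the tail bound directly to the pointwise decay lemma that immediately precedes the theorem, and then to upgrade pointwise constants to uniform ones by a compactness argument in $k$.

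The first step is term-by-term majoration. Fix $|X|\leq L$ and $|Y|<M$, and consider an index $n$ with $|n|\geq N$. Since $|X+nL|\geq(|n|-1)L$, for $|n|$ exceeding a threshold $N_0$ depending only on $L$ one has $|X+nL|>1$ and $|X+nL|\geq\tfrac{L}{2}|n|$, placing the point $(X+nL,Y)$ in the regime covered by the preceding lemma. That lemma then gives
\begin{equation*}
\abs{\partial_m^\ell G_{j,k}(X+nL,Y)} \leq \tilde C_{M,k}\,|n|^{-p(j)},
\end{equation*}
where $p(j)=(j+1)/2$ for $j$ even and $p(j)=(j+2)/2$ for $j$ odd, with $\tilde C_{M,k}$ absorbing the factor $(L/2)^{-p(j)}$. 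Since $|e^{-i\alpha nL}|=1$, the triangle inequality yields
\begin{equation*}
\abs{\sum_{|n|\geq N} e^{-i\alpha nL}\,\partial_m^\ell G_{j,k}(X+nL,Y)} \leq \tilde C_{M,k}\sum_{|n|\geq N}|n|^{-p(j)}.
\end{equation*}
Because $p(j)>1$ in both parities, the elementary integral comparison bounds the right-hand side by a constant multiple of $N^{-(p(j)-1)}$, which is exactly $N^{-(j-1)/2}$ when $j$ is even and $N^{-j/2}$ when $j$ is odd. Finitely many small values of $N$ (those below $N_0$) can be accommodated by enlarging the constant, since each such tail is uniformly bounded.

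The second step is the uniformity in $k\in(k_0-\delta,k_0+\delta)$. The $k$-dependence of $\tilde C_{M,k}$ comes from powers of $k$ and from the constants in the Hankel-function asymptotic $H_n^{(1)}(t)\sim\sqrt{2/(\pi t)}\,e^{i(t-n\pi/2-\pi/4)}\{1+O(1/t)\}$ used to prove the preceding lemma; these are continuous (in fact smooth) functions of $k$ away from $k=0$. Shrinking $\delta$ if necessary so that $k$ varies in a compact interval bounded away from the origin, one obtains a finite supremum $D_{M,\delta}=\sup_{|k-k_0|<\delta}\tilde C_{M,k}$, which serves as the uniform constant claimed in the statement.

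The two concluding items are then immediate consequences: the tail bound is a Cauchy criterion for the partial sums of the shifted quasi-periodic series and its derivatives, yielding the convergence to $\partial_m^\ell G_{j,k}^q$, and the same bound provides the quantitative rate of approximation. The main obstacle I anticipate is not the summation itself, which is routine, but the confirmation that the constant extracted from the preceding lemma depends continuously on $k$ rather than being an opaque constant that might degenerate as $k\to k_0$; a careful reading of the derivation in \cite{BrunoDelourme} (in particular, the use of the Hankel asymptotic and the mean value theorem applied to the finite difference) is what certifies this continuous dependence and therefore the uniformity in $k$.
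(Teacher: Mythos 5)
Your argument is correct and is essentially the same approach the paper takes: the paper's own ``proof'' simply cites Theorem 4.4 of the Bruno--Delourme reference and notes that the only new content is the uniformity of the constant in $k$ near $k_0$, verifiable by inspecting that proof, while your write-up fills in the standard steps that citation covers (term-by-term application of the preceding decay lemma, integral comparison of the tail $\sum_{|n|\geq N}|n|^{-p}$ with $p-1$ matching the stated rates, and absorption of finitely many exceptional indices). You also correctly isolate the one genuinely non-routine point---that the constant from the decay lemma must be locally bounded in $k$, which the lemma as stated does not assert---and, exactly like the paper, you resolve it by deferring to an inspection of the reference's derivation rather than proving it here.
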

\begin{proof}
  This result, which is equivalent to Theorem 4.4
  in~\cite{BrunoDelourme}, emphasizes the fact that the constant
  $D_{M,\delta}$ can be taken to be independent of the wavenumber $k$
  for all $k$ in a neighborhood of a given wavenumber $k_0$. The proof
  of the present version of the result can be obtained by inspection
  of the corresponding proof in~\cite{BrunoDelourme}.
\end{proof}
\section{Periodic array scattering at and around Wood anomalies}
\label{sec:formul}
\subsection{Strategy}
As is well known~\cite{BrunoReitich,Linton1,Linton2}, the classical
quasi-periodic Green function displayed in
equation~\eqref{classicalQuasiGreen} can alternatively be expressed in
the spectral form
\begin{equation}
  \label{classicalQuasiGreenSpectral}
  G^q_{0,k}(X,Y)=\frac{i}{2L}\sum \limits_{n\in \ZZ} \frac{e^{i\alpha_nX+i\beta_n \abs{Y}}}{\beta_n}
\end{equation}
which, of course, is only valid provided $\beta_n \neq 0$ for all
$n\in \ZZ$. In other words,
equation~\eqref{classicalQuasiGreenSpectral} is only meaningful away
from Wood anomalies (see Remark~\ref{rem:WA_def}).  Accordingly, the
``spatial'' series~\eqref{classicalQuasiGreen} fails to converge at
Wood-anomaly frequencies, and its convergence slows down significantly
as a Wood anomaly is approached. In the context of
equation~\eqref{classicalQuasiGreenSpectral} this difficulty clearly
arises from those terms in
equation~\eqref{classicalQuasiGreenSpectral} whose denominator tends
to zero as a Wood anomaly is approached: if such ``Wood'' terms are
excluded then the infinite sum~\eqref{classicalQuasiGreenSpectral}
converges and, in fact, it yields an analytic function with respect to
$k$ for each $(X,Y)\ne (m L,0)$, $m\in\ZZ$. The solution strategy
described in the present Section~\ref{sec:formul}, which is based on
detailed analysis around the individual Wood terms, can be briefly
summarized as follows:

\begin{enumerate}
\item\label{one} Integral equations for array-scattering based on the classical
  Green function~\eqref{classicalQuasiGreen} are obtained;
\item\label{two} The resulting integral operator is
  re-expressed as an integral operator defined in terms of the shifted
  Green function (which is defined for every frequency,
  including Wood anomalies), plus a certain ``Rayleigh series
  operator'';
\item For a given Wood-anomaly frequency $k_0\in
  \mathcal{K}_\mathrm{wa}$, the Rayleigh series operator mentioned in
  point~\ref{two} includes a finite-rank operator which encapsulates
  the singular character of the Wood-anomaly problem. Solutions for
  the resulting linear operator equation can then be obtained at and
  around $k_0$ by resorting to ideas closely related to the
  Woodbury-Sherman-Morrison formulae.
\end{enumerate}
Points 1. and 2. are considered in Section~\ref{sec:Pot+intEq} while
point 3. is addressed in Section~\ref{sec:Woodbury}.
\subsection{Hybrid ``Rayleigh-Expansion/Integral-Equation''
  formulation}
\label{sec:Pot+intEq}
Given $k\not \in \mathcal{K}_{\mathrm{wa}}$ the potential 
\begin{equation}\label{solution}
  u^{\mathrm{s}}_k(r)=\int\limits_{\partial D_0}\left(  \partialDerivative{G_{0,k}^q}{\nu(r^\prime)}-i\gamma G_{0,k}^q\right) (r-r^\prime)\psi(r^\prime) dS(r^\prime)
\end{equation}
is a quasi-periodic, radiating solution of the Helmholtz equation in
$\Omega=\RR^2\setminus D$. Note that, in view of the quasi-periodicity
of $u^{\mathrm{s}}$ and $u^\mathrm{inc}$, $u^{\mathrm{s}}$ satisfies
the boundary conditions~\eqref{TE} if and only if it satisfies the
boundary conditions
\begin{equation}\label{TE0}
  u^{\mathrm{s}}=-u^{\mathrm{inc}}_k\quad\mbox{on}\quad \partial  D_0
\end{equation}
where $\partial D_0$ is the portion of the scattering boundary
contained the unit-cell.  Using the well known jump relations of the
single and double layer
potentials~\cite{kressLIEScattering,kressInverse} it follows that
$u^{\mathrm{s}}_k$ is a solution to equations~\eqref{PDE} and
\eqref{TE} if and only if $\psi \in C(\partial D_0)$ satisfies the
integral equation
\begin{equation}\label{intEqG0}
  \frac{1}{2}\psi(r)+\int\limits_{\partial D_0}\left(  \partialDerivative{G_{0,k}^q}{\nu(r^\prime)}-i\gamma G_{0,k}^q\right)(r-r^\prime) \psi(r^\prime) dS(r^\prime)=-u^{\mathrm{inc}}_k(r), \quad r\in \partial D_0.
\end{equation}
Here $\gamma$ is a non-negative real number and the linear combination of
single and double layer potentials in equation~\eqref{solution} is
used to ensure invertibility of the integral-equations formulation at
wavenumbers that equal eigenvalues of the Laplace operator within
$D_0$; see e.g.~\cite{kressLIEScattering} as well as the related
literature~\cite{BrakhageWerner,Leis,Panich,kressInverse}.

 An alternative operator equation can be obtained by noting that
\begin{equation}
G^q_{0,k}(X,Y)=G^q_{j,k}(X,Y) - \sum \limits_{\ell=1}^j (-1)^\ell {j \choose \ell} G_{0,k}^q(X,Y+\ell h)
\end{equation}
and that, using~\eqref{classicalQuasiGreenSpectral}, for $Y>-h$ the
rightmost sum in the above equation can be expressed in the form
\begin{equation}
\frac{i}{2L}\sum \limits_{\ell=1}^j (-1)^\ell {j \choose \ell} G_{0,k}^q(X,Y+\ell h)=\sum \limits_{n\in \ZZ} \frac{\sigma_n(k)}{\beta_n(k)}  e^{i\alpha_n(k) X+ i\beta_n(k) Y}
\end{equation}
where $\sigma_n(k)=(1-e^{i\beta_n(k) h})^j -1$. Thus, selecting a
sufficiently large shift spacing,
\begin{equation}
\label{eq:Assumption}
h>\max\{\abs{y-y^\prime}:(x,y),(x^\prime,y^\prime) \in \partial D_0 \mbox{ for some } x,x^\prime\},
\end{equation}
the integral operator in equation~\eqref{intEqG0} equals
\begin{equation}
\label{operatorGj}
  \int \limits_{\partial D_0}
  \left(\partialDerivative{G_{j,k}^q}{\nu(r^\prime)}-i \gamma G_{j,k}^q\right)
  (r-r^\prime) \psi(r^\prime) dS(r^\prime)-\frac{i}{2L}\sum \limits_{n \in \ZZ}
    \sigma_n(k) \frac{I_{n,k}^+\left[\psi\right]}{\beta_n(k)} e^{i\alpha_n(k) x +i\beta_n(k) y}
\end{equation}
where
\begin{equation}
\label{InPlus}
I_{n,k}^+\left[\psi\right]=\int \limits_{\partial D_0} \left( \partialDerivative{}{\nu(r^\prime)} \left( e^{-i\alpha_n(k) x^\prime -i\beta_n(k) y^\prime}\right) -i\gamma e^{-i\alpha_n(k) x^\prime -i\beta_n(k) y^\prime}\right) \psi(r^\prime)dS(r^\prime).
\end{equation}
It follows that, letting $(x,y) = r \in \partial D_0$ denote the
Cartesian coordinates of $r$, equation~\eqref{intEqG0} is equivalent
to
\begin{equation}
\label{intEqGj}
  \frac{1}{2} \psi(r) + \int \limits_{\partial D_0}
  \left(\partialDerivative{G_{j,k}^q}{\nu(r^\prime)}-i \gamma G_{j,k}^q\right)
  (r-r^\prime) \psi(r^\prime) dS(r^\prime)-\frac{i}{2L}\sum \limits_{n \in \ZZ} \sigma_n(k) \frac{I_{n,k}^+\left[\psi\right]}{\beta_n(k)} e^{i\alpha_n(k) x +i\beta_n(k)
    y}=-u^{\mathrm{inc}}_k(r).
\end{equation}
The corresponding expression for the potential~\eqref{solution} in
terms of the shifted Green function $G_{j,k}^q$ is given by the expression
\begin{equation}
\label{eq:potRepGj+}
u^{\mathrm{s}}_k(r)=\int \limits_{\partial D_0}
  \left(\partialDerivative{G_{j,k}^q}{\nu(r^\prime)}-i \gamma G_{j,k}^q\right)
  (r-r^\prime) \psi(r^\prime) dS(r^\prime)-\frac{i}{2L}\sum \limits_{n \in \ZZ}  \sigma_n(k)\frac{I_{n,k}^+\left[\psi\right]}{\beta_n(k)} e^{i\alpha_n(k) x +i\beta_n(k)
    y},
\end{equation}
which is valid in the region $\Omega^+=\{y\geq M^--h\}$ (see
Figure~\ref{problemGeometry}). For points in $\Omega^-=\{y<M^--h\}$,
the potential is represented by the Rayleigh series
\begin{equation}
\label{eq:potRepGj-}
u^{\mathrm{s}}_k(r)=\frac{i}{2L} \sum \limits_{n\in \ZZ}  \frac{I_{n,k}^-\left[\psi\right]}{\beta_n(k)} e^{i\alpha_n(k) x -i\beta_n(k) y}
\end{equation}
where we have set
\begin{equation}
\label{InMinus}
I_{n,k}^-[\psi]=\int \limits_{\partial D_0} \left( \partialDerivative{}{\nu(r^\prime)} \left( e^{-i\alpha_n(k) x^\prime +i\beta_n(k) y^\prime}\right) -i\gamma e^{-i\alpha_n(k) x^\prime +i\beta_n(k) y^\prime}\right) \psi(r^\prime)dS(r^\prime).
\end{equation}

Away from Wood anomalies, the integral equation
formulation~\eqref{intEqG0} or, equivalently, the hybrid integral
equation/Rayleigh series operator formulation~\eqref{intEqGj} are well
posed: the same arguments used in the proof of the invertibility of
the combined field formulation for a single bounded obstacle can be
applied to the periodic problem provided that we assume uniqueness of
solutions for the periodic PDE problem. Once these operator equations
are solved, the representation formulas~\eqref{solution}
or~\eqref{eq:potRepGj+} and~\eqref{eq:potRepGj-} can be used to obtain
scattering solutions. 

However, at and around Wood anomalies further work is needed. As 
mentioned previously, the classical quasi-periodic Green function
ceases to exist at Wood frequencies and therefore the integral
equation~\eqref{intEqG0} cannot be used. Even though the shifted Green
function does exist at Wood anomalies (and, therefore, integral
operators that have it as kernel are well-defined) the hybrid
formulation~\eqref{intEqGj} cannot be used directly at this singular
case. Indeed, at Wood anomalies the infinite sum in
equation~\eqref{intEqGj} contains some vanishing denominators. The
merit of equation~\eqref{intEqGj}, however, is that it makes explicit
the singular behaviour at Wood anomalies in the form of a finite rank
operator (the finite sum of those terms whose denominator are close to
zero) while the remainder of the hybrid integral/Rayleigh series
operator in the left hand side of equation~\eqref{intEqGj} is well
defined. Thus, the strategy to solve the problem relies on being able
to invert equations of the general form
\begin{equation}
(A+R) \psi = f
\end{equation}
where $A$ is assumed to be an invertible operator and $R$ a
finite-rank operator containing vanishing denominators. The next
section introduces a general linear algebra result (which can be seen
as a slight re-interpretation of the Woodbury-Sherman-Morrison
formulae) and then applies it to equation~\eqref{intEqGj}.

\subsection{Solution at and around Wood-anomaly frequencies}
\label{sec:Woodbury}
Let $k_0 \in \mathcal{K}_{\mathrm{wa}}$ be a Wood frequency and
consider the set of integers $\mathcal{N}_{\mathrm{wa}}=\{n\in \ZZ:
\beta_n(k_0)=0\}$ which, in view of Remark~\ref{rem:WA_def}, has at
most two elements. For a frequency $k$ in a neighbourhood of $k_0$ we
define the finite-rank operators
\begin{align}\label{RDirichlet}
  R_{k}(\psi)(x,y)&= -\frac{i}{2L} \sum \limits_{n \in
    \mathcal{N}_{\mathrm{wa}}}  \sigma_n(k)
  \frac{I_{n,k}^+\left[\psi\right]}{\beta_n(k)} e^{i\alpha_n(k)
    x+i\beta_n(k) y}
  \\
\label{RDirichlet2}  \widetilde R_{k}(\psi)(x,y)&= -\frac{i}{2L} \sum \limits_{n \in \mathcal{N}_{\mathrm{wa}}}
  \sigma_n(k) I_{n,k}^+\left[\psi\right] e^{i\alpha_n(k)
    x+i\beta_n(k) y}
\end{align}
where $(x,y)$ are the Cartesian coordinates of a point in $\partial
D_0$. The operator $R_k$ isolates the finitely many terms in the
infinite sum in equation~\eqref{intEqGj} that contain denominators
that vanish as $k_0$ is approached. The operator $\tilde{R}_k$ is a
re-scaled version of $R_{k}$ that does not contain vanishing
denominators, and which will play a major role in the application of
the inversion formula~\eqref{eq:WoodburyFormula} to the scattering
problem under consideration. Equation~(\ref{intEqGj}) can then be
re-expressed in the form
\begin{equation}
\label{operatorEquation}
\left(A_k+R_k \right) \psi = -u^{\mathrm{inc}}_k
\end{equation}
where 
\begin{equation} 
\label{opA}
A_k\psi(r)=\frac{1}{2} \psi(r) + \int \limits_{\partial D_0} \left(\partialDerivative{G_{j,k}^q}{\nu(r^\prime)}-i \gamma G_{j,k}^q\right) (r-r^\prime) \psi(r^\prime) dS(r^\prime)-\frac{i}{2L} \sum \limits_{n \not \in \mathcal{N}_{\mathrm{wa}}}  \sigma_n(k) \frac{I_{n,k}^+\left[\psi\right]}{\beta_n(k)} e^{i\alpha_n(k) x+i\beta_n(k) y}.
\end{equation}
The proposed strategy to solve equation~\eqref{intEqGj} (or,
equivalently,~\eqref{operatorEquation}) at and around Wood anomalies
relies strongly on an operator formula, related to the Woodbury and
Sherman-Morrison relations~\cite[Sects. 2.7.1 and
2.7.3]{FortranRecipes}, which is presented in the following lemma.
\begin{remark}\label{rem_lemma}
  The notation and hypotheses used in the lemma are as follows: $X$
  denotes a general vector space over $\CC$, $\mathcal{L}(X)$ denotes
  the set of linear (not necessarily continuous) operators from $X$ to
  $X$, $A\in \mathcal{L}(X)$ is an invertible operator and
  $R_{\boldsymbol{b} }\in \mathcal{L}(X)$ denotes a finite rank
  operator which for $f\in X$ takes the value
\begin{equation}\label{f_rank}
  R_{\boldsymbol{b}}[f]= \sum \limits_{j=1}^r  \frac{\ell_j[f]}{b_j} w_j,
\end{equation}
where $b_j$ denote nonzero complex numbers, $\ell_j$ denote linear
functionals defined on $X$, and where $w_1, \dots , w_r$ form a
linearly independent set of elements of $X$.  Additionally, we
consider the re-scaled finite-rank operator
\begin{equation}
  R_{\boldsymbol{1}}[f] = \sum \limits_{j=1}^r \ell_j[f] w_j.
\end{equation}
Setting $\mathcal{W} =\{w_1, \dots , w_r \}$ and $S=\mathrm{span}
\left(\mathcal{W}\right )\subseteq X$ we see that both $R_{\boldsymbol{b}
}A^{-1}$ and $R_{\boldsymbol{1} }A^{-1}$ map all of $X$ into $S$ and,
in particular, they map $S$ into itself. We thus consider the
restrictions of $R_{\boldsymbol{b}}A^{-1}$ and
$R_{\boldsymbol{1}}A^{-1}$ to the subspace $S$ and denote them by
$T_{\boldsymbol{b}}$ and $T_{\boldsymbol{1}}$ respectively. Letting,
further, $D_{\boldsymbol{b}}:S\to S$ denote the diagonal operator
defined by
\begin{equation}\label{D_b}
  D_{\boldsymbol{b}} w_j =b_j w_j,
\end{equation}
we obtain $R_{\boldsymbol{b}}=D_{\boldsymbol{b}}^{-1}
R_{\boldsymbol{1}}$ throughout $X$ and thus, in particular,
$T_{\boldsymbol{b}}=D_{\boldsymbol{b}}^{-1}T_{\boldsymbol{1}}$.
\end{remark}
\begin{lemma}
\label{lem:Woodbury}
Using the notations and hypotheses as in Remark~\ref{rem_lemma},
assume that the operator $(D_{\boldsymbol{b}}+T_{\boldsymbol{1}}):S\to
S$ is invertible. Then the operator $(A+R_{\boldsymbol{b}})$ is also
invertible, and its inverse is given by
\begin{equation}
\label{eq:WoodburyFormula}
(A+R_{\boldsymbol{b}})^{-1} = A^{-1} \left( \mathbb{I} - (D_{\boldsymbol{b}}+T_{\boldsymbol{1}})^{-1} R_{\boldsymbol{1}}A^{-1} \right),
\end{equation}
where $\mathbb{I}$ denotes the identity operator.
\end{lemma}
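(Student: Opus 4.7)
The plan is to verify directly that the operator
$B := A^{-1}\bigl(\mathbb{I} - (D_{\boldsymbol{b}}+T_{\boldsymbol{1}})^{-1} R_{\boldsymbol{1}}A^{-1}\bigr)$ is a two-sided inverse of $A+R_{\boldsymbol{b}}$, rather than invoking the Sherman--Morrison--Woodbury identity as a black box. Two structural facts recorded in Remark~\ref{rem_lemma} drive everything: the factorization $R_{\boldsymbol{b}} = D_{\boldsymbol{b}}^{-1}R_{\boldsymbol{1}}$, which makes sense on all of $X$ because the range of $R_{\boldsymbol{1}}$ lies in $S$ where $D_{\boldsymbol{b}}^{-1}$ is defined; and the identification of $T_{\boldsymbol{1}}$ with the restriction of $R_{\boldsymbol{1}}A^{-1}$ to $S$. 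Together these let me replace any composite of the form (operator landing in $S$) $\circ\, R_{\boldsymbol{1}}A^{-1}$ or $\circ\, R_{\boldsymbol{b}}A^{-1}$ by products of the operators $T_{\boldsymbol{1}}$ and $D_{\boldsymbol{b}}$ acting on $S$, which is the only arena in which the hypothesis that $D_{\boldsymbol{b}}+T_{\boldsymbol{1}}$ is invertible can be used.

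\textbf{Verifying $(A+R_{\boldsymbol{b}})B = \mathbb{I}$.} First I would expand to obtain
\begin{equation*}
(A+R_{\boldsymbol{b}})B = \mathbb{I} + R_{\boldsymbol{b}}A^{-1} - (D_{\boldsymbol{b}}+T_{\boldsymbol{1}})^{-1}R_{\boldsymbol{1}}A^{-1} - R_{\boldsymbol{b}}A^{-1}(D_{\boldsymbol{b}}+T_{\boldsymbol{1}})^{-1}R_{\boldsymbol{1}}A^{-1}.
\end{equation*}
Since the range of $(D_{\boldsymbol{b}}+T_{\boldsymbol{1}})^{-1}R_{\boldsymbol{1}}A^{-1}$ lies in $S$, I may substitute $R_{\boldsymbol{b}}A^{-1} = D_{\boldsymbol{b}}^{-1}T_{\boldsymbol{1}}$ on that range; and globally $R_{\boldsymbol{b}}A^{-1} = D_{\boldsymbol{b}}^{-1}R_{\boldsymbol{1}}A^{-1}$. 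Pulling out the common right factor $R_{\boldsymbol{1}}A^{-1}$, the three non-identity terms coalesce into
\begin{equation*}
\bigl[D_{\boldsymbol{b}}^{-1} - (\mathbb{I}_S+D_{\boldsymbol{b}}^{-1}T_{\boldsymbol{1}})(D_{\boldsymbol{b}}+T_{\boldsymbol{1}})^{-1}\bigr]\,R_{\boldsymbol{1}}A^{-1},
\end{equation*}
with the bracket an operator on $S$. Writing $\mathbb{I}_S+D_{\boldsymbol{b}}^{-1}T_{\boldsymbol{1}} = D_{\boldsymbol{b}}^{-1}(D_{\boldsymbol{b}}+T_{\boldsymbol{1}})$ collapses the bracket to $D_{\boldsymbol{b}}^{-1} - D_{\boldsymbol{b}}^{-1} = 0$, so $(A+R_{\boldsymbol{b}})B = \mathbb{I}$.

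\textbf{The reverse direction and the main pitfall.} Then I would run the analogous computation for $B(A+R_{\boldsymbol{b}})$: here the key simplification is $R_{\boldsymbol{1}}A^{-1}R_{\boldsymbol{b}} = T_{\boldsymbol{1}}R_{\boldsymbol{b}} = T_{\boldsymbol{1}}D_{\boldsymbol{b}}^{-1}R_{\boldsymbol{1}}$, and after extracting $A^{-1}$ on the left and $R_{\boldsymbol{1}}$ on the right the surviving bracket becomes $D_{\boldsymbol{b}}^{-1} - (D_{\boldsymbol{b}}+T_{\boldsymbol{1}})^{-1}(\mathbb{I}_S + T_{\boldsymbol{1}}D_{\boldsymbol{b}}^{-1})$, which vanishes by the mirror identity $\mathbb{I}_S + T_{\boldsymbol{1}}D_{\boldsymbol{b}}^{-1} = (D_{\boldsymbol{b}}+T_{\boldsymbol{1}})D_{\boldsymbol{b}}^{-1}$. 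Checking both sides is necessary because $X$ may be infinite-dimensional, so one-sided invertibility does not a priori suffice. The main obstacle is purely bookkeeping: $T_{\boldsymbol{1}}$ and $T_{\boldsymbol{b}}$ may be substituted for $R_{\boldsymbol{1}}A^{-1}$ and $R_{\boldsymbol{b}}A^{-1}$ respectively only when the preceding operator already maps into $S$, because that is the only place where the hypothesis supplies an inverse of $D_{\boldsymbol{b}}+T_{\boldsymbol{1}}$; keeping careful track of this is what makes the algebra route through the finite-dimensional world where the hypothesis lives.
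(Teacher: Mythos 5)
Your proposal is correct and follows the same route as the paper: the paper's proof simply states that the formula is verified to be a two-sided inverse by direct multiplication, which is exactly the computation you carry out (and your bookkeeping of where $R_{\boldsymbol{b}}A^{-1}$ and $R_{\boldsymbol{1}}A^{-1}$ may be replaced by $T_{\boldsymbol{b}}=D_{\boldsymbol{b}}^{-1}T_{\boldsymbol{1}}$ and $T_{\boldsymbol{1}}$ is accurate). You supply more detail than the paper does, but the argument is the same.
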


\begin{proof}
  In view of the hypotheses of the lemma the operator on the
  right-hand side of equation~\eqref{eq:WoodburyFormula} is
  well-defined. The lemma follows by direct verification (via
  multiplication) that the right hand side operator
  in~\eqref{eq:WoodburyFormula} is both a left and right inverse of
  $A+R_{\boldsymbol{b}}$. (The expression~\eqref{eq:WoodburyFormula}
  for an operator $(A+R_{\boldsymbol{b}})$ in a normed space can be
  derived by writing $(A+z R_{\boldsymbol{b}})^{-1} =
  A^{-1}(\mathbb{I}+z R_{\boldsymbol{b}}A^{-1})^{-1}$, expressing the
  inverse operator $(\mathbb{I}+z R_{\boldsymbol{b}}A^{-1})^{-1}$ by
  means of a Neumann series which is convergent for $z$ sufficiently
  small, and performing some simple manipulations. The resulting
  formula holds for any value of $z$, and in particular for $z=1$.)
\end{proof}

The solutions of equation~\eqref{intEqGj}, for frequencies around Wood
anomalies, are obtained inverting the operator~\eqref{opA}; the
following Lemma~\ref{lem:contA_k} establishes a few regularity
properties of this operator which are necessary to establish
Theorem~\ref{thm:Woodbury}. In what follows $C(\partial D_0)$ denotes
the Banach space of complex-valued continuous functions along
$\partial D_0$, endowed with the maximum norm.
\begin{lemma}
\label{lem:contA_k}
Let $k_0 \in \mathcal{K}_{\mathrm{wa}}$. Then there exists $\delta>0$
such that, for $k \in (k_0-\delta,k_0+\delta)$, the operator $A_k$
maps $C(\partial D_0)$ into $C(\partial D_0)$ and the mapping $k \to
A_k$ is continuous.
\end{lemma}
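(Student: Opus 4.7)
The plan is to decompose $A_k$ as given in~\eqref{opA} into three pieces---the identity $\tfrac{1}{2}\mathbb{I}$, the shifted-Green-function boundary integral operator $K_{j,k}$ with kernel $\partial G^q_{j,k}/\partial\nu(r^\prime) - i\gamma G^q_{j,k}$, and the Rayleigh-tail operator $S_k$ consisting of the sum over $n \notin \mathcal{N}_{\mathrm{wa}}$---and to establish both claims (mapping $C(\partial D_0)$ to itself, and continuity in $k$) for each piece separately. The identity term is trivially handled; the bulk of the work concerns the other two pieces.

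For $K_{j,k}$, I would first note that on $\partial D_0 \times \partial D_0$ the only singularity of the kernel arises from the $n=0$, $\ell=0$ free-space term in the definitions~\eqref{Gj_def}--\eqref{quasiPerGj}; the remaining Green-function poles, located at $r-r^\prime = (-nL,-\ell h)$ with $(n,\ell) \neq (0,0)$, are separated from the diagonal by virtue of the periodicity of the array and the shift-spacing assumption~\eqref{eq:Assumption}. Consequently, the kernel of $K_{j,k}$ has the usual logarithmic singularity at $r=r^\prime$ plus a globally smooth quasi-periodic remainder. Standard potential-theory arguments (compactness of $\partial D_0$, uniform continuity of the smooth remainder) then yield $K_{j,k}\colon C(\partial D_0)\to C(\partial D_0)$. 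Operator-norm continuity of $k \mapsto K_{j,k}$ follows from the joint continuity of $G_{j,k}$ and its first derivatives in $(r-r^\prime,k)$ on compact sets avoiding the origin, combined with Theorem~\ref{thm:shiftedGreenFunctionDecay}, which supplies a tail bound for the quasi-periodic sum that is uniform over $k$ in a neighborhood of $k_0$.

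For $S_k$ the key facts are: (i) $\beta_n(k_0)\neq 0$ for every $n\notin \mathcal{N}_{\mathrm{wa}}$, so by continuity there is $\delta>0$ such that $|\beta_n(k)|$ is uniformly bounded below over $n\notin \mathcal{N}_{\mathrm{wa}}$ and $k\in (k_0-\delta,k_0+\delta)$; and (ii) for $|n|$ large, $\beta_n(k)$ is purely imaginary with $\mathrm{Im}\,\beta_n(k) \sim 2\pi|n|/L$. From (ii) and the identity $\sigma_n(k)=(1-e^{i\beta_n(k)h})^j-1$ one gets $|\sigma_n(k)|=O(e^{-h\,\mathrm{Im}\,\beta_n(k)})$ as $|n|\to\infty$. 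The outer factor $e^{i\alpha_n x + i\beta_n y}$ together with the integrand of $I_{n,k}^+[\psi]$ contribute at worst a factor of order $|\beta_n|\,e^{(\sup y^\prime - \inf y)\,\mathrm{Im}\,\beta_n}$ on $\partial D_0$, and the strict inequality in~\eqref{eq:Assumption} yields $h > \sup y^\prime - \inf y$; combined with the decay of $\sigma_n$, this produces term-by-term exponential decay of the summand, uniformly over $(x,y)\in\partial D_0$, $k\in(k_0-\delta,k_0+\delta)$, and $\psi$ in the unit ball of $C(\partial D_0)$. Since each summand is continuous in $(x,y)$ and jointly continuous in $k$ (by the continuous dependence of $\alpha_n$, $\beta_n$, $\sigma_n$ and $I_{n,k}^+$ on $k$), the resulting uniform convergence delivers $S_k\colon C(\partial D_0)\to C(\partial D_0)$ and norm continuity of $k\mapsto S_k$.

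The chief technical obstacle is the bookkeeping for the Rayleigh tail in $S_k$: one must track the competition between the exponential decay of $\sigma_n(k)$ and the possible exponential growth of the evanescent-mode integrand on $\partial D_0$, confirm via the strict form of~\eqref{eq:Assumption} that the net exponent is negative and bounded away from zero, and verify that all constants can be taken uniform in $k$ on a small neighborhood of $k_0$. Once this uniformity is in place, continuity in both the spatial variable and the frequency follows from standard Weierstrass-type arguments applied to the now uniformly convergent series.
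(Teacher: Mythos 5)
Your proposal is correct and follows essentially the same route as the paper: the same splitting of $A_k$ into the integral operator (with its weakly singular $n=0$, $\ell=0$ part isolated and the smooth quasi-periodic remainder handled via Theorem~\ref{thm:shiftedGreenFunctionDecay}) and the Rayleigh-series operator, with the uniform lower bound on $|\beta_n(k)|$ for $n\notin\mathcal{N}_{\mathrm{wa}}$ and the exponential-decay bookkeeping based on the strict inequality $h>\sup y^\prime-\inf y$ from~\eqref{eq:Assumption} matching the paper's estimate~\eqref{contA_k_eqn}. No gaps to report.
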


\begin{proof}
  Let $\delta>0$, $\rho>0$ be such that $\abs{\beta_n(k)}>\rho$ for
  all $n\not \in \mathcal{N}_{\mathrm{wa}}(k_0)$ and all $k$
  satisfying $\abs{k-k_0}<\delta$. (The existence of such $\delta$ and
  $\rho$ values is easily checked from the definition of $\beta_n(k)$
  in Section~\ref{sec:array_bdd}.) Thus, for $\abs{k-k_0}<\delta$ all
  the denominators in the series in equation~\eqref{opA} are bounded
  away from zero. To study the mapping properties of $A_k$ we consider
  the integral operator and the ``Rayleigh-series'' operator in
  equation~\eqref{opA} separately.

  The kernel of the integral operator can be viewed as the sum of a
  weakly singular kernel and a continuous kernel. The weakly singular
  kernel is the portion of the term $n=0$ in the infinite
  sum~\eqref{quasiPerGj} which results by using $\ell=0$ in the
  corresponding finite sum $G_{j,k}$ given in equation~\eqref{Gj_def};
  the continuous kernel, in turn, equals the sum of all of the
  remaining terms in~\eqref{quasiPerGj}.  Clearly the weakly singular
  kernel equals a combination of the Hankel function $H_0^{(1)}\left(k
    | r|\right)$ ($|r| = \sqrt{X^2+Y^2}$) and its normal derivative.

  In view of~\cite[Thm 2.6]{kressLIEScattering} it follows that the
  integral operator in equation~\eqref{opA} maps continuous densities
  $\psi$ into continuous functions on $\partial D_0$.  Further, since
  the singular kernel can be expressed in the form $F_1\left(k |
    r|\right)\log \left(k | r|\right)+F_2\left(k | r|\right)$ with
  smooth functions $F_1$ and $F_2$, it is easy to check that the
  singular-term portion of the integral operator varies continuously
  with respect to $k$ (since $\log \left(k | r|\right)= \log
  \left(k\right) + \log \left(| r|\right)$ and since all other
  integrands in the singular-term operator vary smoothly with
  $k$). The continuity of the remaining terms of the integral operator
  in~\eqref{opA} as a function of $k$ follows easily from the uniform
  convergence, established in
  Theorem~\ref{thm:shiftedGreenFunctionDecay}, of the series that
  defines the shifted quasi-periodic Green function. Therefore, the
  whole integral operator is a continuous function of $k$ with values
  in the space $C(\partial D_0)$.

  The mapping properties of the ``Rayleigh-series'' operator, in turn,
  follow from the uniform convergence of the
  series~\eqref{operatorEquation}---whose terms, as shown in what
  follows, decay at a uniform exponential rate as soon as $n$ is
  sufficiently large that $\beta_n =i \abs{\beta_n}$. In detail, given
  $\psi \in C(\partial D_0)$ and $n$ sufficiently large, for an
  arbitrary point $(x,y) \in \partial D_0$ we obtain the following
  estimate
\begin{equation}\label{contA_k_eqn}
  \abs{\sigma_n(k) \frac{I_{n,k}^+[\psi]}{\beta_n(k)} e^{i\alpha_n(k) x+i\beta_n(k) y}}\leq \frac{(k+\gamma)}{\rho}\norm{\psi} \sum \limits_{\ell=1}^j  {j \choose \ell} \int \limits_{\partial D_0}  \abs{e^{-\abs{\beta_n(k)}(y- y^\prime+\ell h)}}dS(r^\prime)
\end{equation}
which, in view of the assumption~\eqref{eq:Assumption} on the shift
parameter $h$, clearly exhibits the claimed uniform exponential
decay. Since each one of the terms in the infinite series is a
continuous function of both the spatial variable along $\partial D_0$
as well as the frequency $k$, it follows that the infinite series
operator in~\eqref{opA} defines a continuous function along $\partial
D_0$ which varies continuously with $k$ in the space $C(\partial
D_0)$. This completes the proof of the Lemma.

\end{proof}

Theorem~\ref{thm:Woodbury} below, whose proof relies on
Lemmas~\ref{lem:Woodbury} and~\ref{lem:contA_k}, produces (unique)
solutions of equation~\eqref{intEqGj} for non-Wood frequencies
arbitrarily close to a Wood frequency, it shows that those solutions
admit a limit as the frequency tends to the Wood anomaly, and it
provides an explicit expression for the limit solution which does not
require use of a limiting process.

\begin{theorem}
\label{thm:Woodbury}
Let $k_0 \in \mathcal{K}_{\mathrm{wa}}$ and let $S =
\mathrm{span}\{w_1,\dots,w_r \}$ ($r=1$ or $r=2$) be the
(finite-dimensional) subspace of $C(\partial D_0)$ spanned by the
elements $w_j = \exp (i\alpha_{n_j}(k) x+ i\beta_{n_j}(k) y)$, with
$n_j$ ($j=1,\dots,r$) defined as in Remark~\ref{rem:WA_def}. If the
operator $A_{k_0}:C(\partial D_0) \to C(\partial D_0)$
(equation~\eqref{opA} with $k=k_0$) is invertible, then there exists
$\delta>0$ such that for $\abs{k-k_0}<\delta$ we have:
\begin{enumerate}
\item\label{thm:item1} The operator $A_{k}$ is invertible and the
  inverse $A_{k}^{-1}$ is a continuous function of $k$ for
  $\abs{k-k_0}<\delta$.
\item\label{thm:item2} The restriction $\tilde{T}_k =
  \left. \tilde{R}_k A_k^{-1}\right |_S$ of the composite operator
  $\tilde{R}_k A_k^{-1}$ to the subspace $S$ maps $S$ into itself
  bijectively and bicontinuously, and the inverse $\tilde{T}_k^{-1}$
  is a continuous function of $k$ for $\abs{k-k_0}<\delta$.
\item\label{thm:item3} Let
  $\boldsymbol{\beta(k)}=(\beta_{n_1}(k),\dots,\beta_{n_r}(k))$ and
  let $0<\abs{k-k_0}<\delta$. Then, the solution $\psi$ of
  equation~\eqref{intEqGj} (or,
  equivalently,~\eqref{operatorEquation}) is given by $\psi=\psi_{k}$,
  where
\begin{equation}
\label{eq:IESol}
\psi_{k}=-A_k^{-1} \left( \mathbb{I} - (D_{\boldsymbol{\beta(k)}}+\tilde{T}_{k})^{-1} \tilde{R}_{k}A_k^{-1} \right)u^{\mathrm{inc}}_k.
\end{equation}
Here the operator $D_{\boldsymbol{\beta(k)}}: S\to S$ is defined by
\begin{equation}\label{D_beta}
D_{\boldsymbol{\beta(k)}}w_j = \beta_{n_j} w_j.
\end{equation}
\end{enumerate}
Finally, the operator on the right hand side of
equation~\eqref{eq:IESol}, and therefore $\psi_k$ itself, are well
defined for $\abs{k-k_0}<\delta$, and, in particular, at $k=k_0$.  The
correspondence $k \to \psi_k$ maps $k\in (k_0-\delta, k_0+\delta)$
continuously into $C(\partial D_0)$. In particular, the
solutions~\eqref{eq:IESol} tend uniformly to $\psi_{k_0}$ as $k\to
k_0$.
\end{theorem}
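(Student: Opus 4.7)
The plan is to prove the three numbered items in order and then deduce the concluding continuity statement from them.

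For item~\ref{thm:item1}, I would combine Lemma~\ref{lem:contA_k} with two standard facts on the Banach space $C(\partial D_0)$: the set of invertible bounded operators is open in the norm topology, and inversion is norm-continuous on it. The assumption that $A_{k_0}$ is invertible, together with the norm-continuity of $k\mapsto A_k$ supplied by Lemma~\ref{lem:contA_k}, therefore produces some $\delta_1>0$ on which $A_k$ is invertible and $k\mapsto A_k^{-1}$ is operator-norm continuous.

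For item~\ref{thm:item2}, I would first verify that $k\mapsto\tilde R_k$ is continuous into its finite-dimensional range $S$: the factors $\sigma_{n_j}(k)$, the linear functionals $I_{n_j,k}^{+}$ on $C(\partial D_0)$, and the basis elements $w_j=\exp(i\alpha_{n_j}(k)x+i\beta_{n_j}(k)y)$ are all continuous in $k$ in their respective norms. Composition with $A_k^{-1}$ from item~\ref{thm:item1} and restriction to $S$ then makes $k\mapsto\tilde T_k\in\mathcal{L}(S)$ continuous, and because $\dim S\le 2$ the invertible elements of $\mathcal{L}(S)$ form an open set on which inversion is continuous. The entire content of item~\ref{thm:item2} therefore reduces to the invertibility of $\tilde T_{k_0}$. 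This is the main obstacle of the proof, and I would handle it by combining Lemma~\ref{lem:Woodbury} with uniqueness of the physical scattering problem: for $k\neq k_0$ in a punctured neighbourhood of $k_0$ with $k\notin\mathcal{K}_{\mathrm{wa}}$, standard Fredholm arguments based on uniqueness of radiating quasi-periodic solutions of~\eqref{PDE}--\eqref{TE} show that $A_k+R_k$ is invertible, hence by Lemma~\ref{lem:Woodbury} so is $D_{\boldsymbol\beta(k)}+\tilde T_k$; then a contradiction argument shows that a nontrivial kernel vector $\phi\in\ker\tilde T_{k_0}$ would force $\psi_0:=A_{k_0}^{-1}\phi$ to satisfy the vanishing conditions $I_{n_j,k_0}^{+}[\psi_0]=0$, which are precisely what is needed to remove the $0/0$ Rayleigh terms in~\eqref{eq:potRepGj+} at $k=k_0$ and thereby produce a nontrivial radiating quasi-periodic homogeneous solution of~\eqref{PDE}--\eqref{TE}—contradicting physical uniqueness at $k_0$.

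For item~\ref{thm:item3}, I would apply Lemma~\ref{lem:Woodbury} with $A=A_k$, $R_{\boldsymbol b}=R_k$, $\boldsymbol b=\boldsymbol\beta(k)$, $R_{\boldsymbol 1}=\tilde R_k$, and $T_{\boldsymbol 1}=\tilde T_k$: for $0<|k-k_0|<\delta$ the diagonal operator $D_{\boldsymbol\beta(k)}$ is invertible and, by item~\ref{thm:item2}, so is $D_{\boldsymbol\beta(k)}+\tilde T_k$, hence~\eqref{eq:WoodburyFormula} applies and, evaluated on $-u^{\mathrm{inc}}_k$, yields exactly~\eqref{eq:IESol}. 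The concluding continuity statement then follows by chaining continuities: $k\mapsto A_k^{-1}$, $k\mapsto\tilde R_k$, $k\mapsto u^{\mathrm{inc}}_k$, and $k\mapsto(D_{\boldsymbol\beta(k)}+\tilde T_k)^{-1}$ are continuous on the whole interval $(k_0-\delta,k_0+\delta)$, the last of these extending continuously to $k=k_0$ because $D_{\boldsymbol\beta(k)}\to 0$ while $\tilde T_k\to\tilde T_{k_0}$ and $\tilde T_{k_0}$ is invertible; assembling these gives $k\mapsto\psi_k\in C(\partial D_0)$ continuous and in particular $\psi_k\to\psi_{k_0}$ uniformly. The substance of the proof is thus concentrated in the invertibility of $\tilde T_{k_0}$: the Woodbury identity is a purely algebraic rearrangement, and all the remaining steps reduce to continuity and openness in $\mathcal{L}(C(\partial D_0))$ and $\mathcal{L}(S)$.
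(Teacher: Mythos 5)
Your overall architecture coincides with the paper's: perturbation/openness of invertibles for Points~\ref{thm:item1} and~\ref{thm:item2}, Lemma~\ref{lem:Woodbury} for Point~\ref{thm:item3}, and chaining of continuities for the final claim, with everything hinging on the invertibility of $\tilde T_{k_0}=\tilde R_{k_0}A_{k_0}^{-1}\big|_S$ (which the paper relegates to Appendix~\ref{app:invertibility}). You have correctly located the crux. However, your argument for that crux has a genuine gap. You claim that a nontrivial $\phi\in\ker\tilde T_{k_0}$ yields, via the vanishing conditions $I^+_{n_j,k_0}[\psi_0]=0$ for $\psi_0=A_{k_0}^{-1}\phi$, ``a nontrivial radiating quasi-periodic homogeneous solution'' of~\eqref{PDE}--\eqref{TE}, contradicting uniqueness. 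But the potential $v_{k_0}$ generated by $\psi_0$ is \emph{not} homogeneous: by the jump relations its boundary trace is $A_{k_0}\psi_0=\phi\neq 0$. So there is no direct contradiction with uniqueness, and the argument as stated does not close. The step that is missing is the one the paper's appendix supplies: since $\phi\in S$ is a combination $\sum_{n\in\mathcal N_{\mathrm{wa}}}C_n e^{i\alpha_n x}$ of grazing modes, and each $e^{i\alpha_n x}$ is itself a globally defined \emph{radiating} solution of the Helmholtz equation at $k_0$ (because $k_0^2=\alpha_n^2$), uniqueness identifies $v_{k_0}$ with $\sum C_n e^{i\alpha_n x}$ throughout $\Omega$; one then compares upward Rayleigh expansions --- equation~\eqref{RayleighAbove} (or~\eqref{RayleighAbove_1}) shows that $v_{k_0}$ carries \emph{no} Wood modes when $I^+_{n,k_0}[\psi_0]=0$, while $\sum C_n e^{i\alpha_n x}$ consists of nothing but Wood modes --- and concludes $C_n=0$, i.e.\ $\phi=0$. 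The contradiction lives in the Rayleigh-coefficient comparison, not in uniqueness itself; uniqueness is only the tool that propagates the boundary identity into the exterior domain.

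Two smaller points. First, your preliminary step ``$A_k+R_k$ invertible for $k\neq k_0$ implies $D_{\boldsymbol\beta(k)}+\tilde T_k$ invertible by Lemma~\ref{lem:Woodbury}'' uses the converse of that lemma (which is true for finite-rank perturbations but not what is stated), and in any case it cannot deliver the needed conclusion: invertibility of $D_{\boldsymbol\beta(k)}+\tilde T_k$ on a punctured neighbourhood does not pass to the limit $k\to k_0$, since the invertible operators form an open, not closed, set. That whole detour can be dropped: once $\tilde T_{k_0}$ is shown invertible as above, continuity of $k\mapsto\tilde T_k$ plus $D_{\boldsymbol\beta(k)}\to 0$ gives invertibility of $D_{\boldsymbol\beta(k)}+\tilde T_k$ near $k_0$ directly, which is exactly how the paper proceeds (and is also how you argue the extension to $k=k_0$ at the end, so the machinery is already in your write-up). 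Second, note that the appendix argument requires uniqueness of radiating solutions of~\eqref{PDE}--\eqref{TE} at $k=k_0$ as an additional standing assumption beyond the stated invertibility of $A_{k_0}$; you invoke the same assumption, so this is consistent with the paper, but it should be stated explicitly.
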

\begin{proof}
  The existence of a certain $\delta_1>0$ such that $A_k$ admits an
  inverse $A_k^{-1}$ for $|k-k_0|<\delta_1$, as well as the continuity
  of $A_k^{-1}$ for $0<|k-k_0|<\delta_1$, follows easily from the
  invertibility of $A_{k_0}$---by means of a simple perturbative
  argument based on use of a Neumann series and the continuity of
  $A_k$ with respect to $k$ (Lemma~\ref{lem:contA_k}). Noting that the finite
  rank operators $\tilde{R}_k$ vary continuously, further, a similar
  perturbative argument allows one to deduce the invertibility of
  $\tilde{R}_kA_k^{-1}$ and the continuity of its inverse for
  $|k-k_0|<\delta_1$ (perhaps reducing the value of $\delta_1$, if
  necessary) from the invertibility of $\tilde{R}_{k_0}
  A_{k_0}^{-1}$--which is itself established in
  Appendix~\ref{app:invertibility} under the hypothesis of the present
  theorem. We have thus established that, for some $\delta_1>0$,
  Points~\ref{thm:item1} and~\ref{thm:item2} hold for all $k$
  satisfying $|k-k_0|<\delta_1$.

  Point~\ref{thm:item3}, in turn, follows by relating the inversion
  formula~\eqref{eq:WoodburyFormula} to
  equation~\eqref{operatorEquation}. Indeed, the invertibility of
  $\tilde{T}_k=\tilde{R}_k A_k^{-1}$ (Point \ref{thm:item2}) implies
  the invertibility of $D_{\boldsymbol{\beta}(k)}+\tilde{T}_k$
  provided $\abs{\beta_{n_j}}$ is sufficiently small for all
  $j$---which is certainly guaranteed provided $\abs{k-k_0}<\delta_2$
  for a sufficiently small value of $\delta_2>0$. Thus, identifying
  $D_{\boldsymbol{b}}$ and $T_{\boldsymbol{1}}$ (in
  Lemma~\ref{lem:Woodbury}) with $D_{\boldsymbol{\beta}(k)}$ and
  $\tilde{T}_k$ respectively, the
  $(D_{\boldsymbol{b}}+T_{\boldsymbol{1}})$ invertibility assumption
  in Lemma~\ref{lem:Woodbury} is satisfied for $\abs{k-k_0}<\delta_2$,
  and, therefore, equation~\eqref{eq:IESol} is obtained from
  equation~\eqref{eq:WoodburyFormula}, as desired.

  To complete the proof of the theorem we note that since
  $\tilde{T}_{k_0}= \tilde{R}_{k_0} A_{k_0}^{-1}:S \to S$ is
  invertible (Appendix~\ref{app:invertibility}), since the image of
  $\tilde{R}_{k_0} A_{k_0}^{-1}$ is contained in $S$, and since for
  $k=k_0$ we have $D_{\boldsymbol{\beta}(k)}=0$ (and thus
  $D_{\boldsymbol{\beta}(k_0)}+\tilde{T}_{k_0}=\tilde{T}_{k_0}$), it
  follows that the right hand side of equation~\eqref{eq:IESol} is
  also defined for $k=k_0$.  The uniform convergence of $\psi_k$ to
  $\psi_{k_0}$ as $k\to k_0$ is established by relying on the
  $k$-continuity at $k=k_0$ (established in Points~\eqref{thm:item1}
  and~\eqref{thm:item2}) of the operators in equation~\eqref{eq:IESol}
  together with the smoothness of the incident field as a function of
  $k$.
\end{proof}

The previous theorem relies on the invertibility of the operator
$A_{k_0}$. Unfortunately the study of solution uniqueness for the
operator $A_{k_0}$ for $k_0\in \mathcal{K}_\mathrm{wa}$ presents
difficulties: as pointed out in Remark~\ref{rem:invert_Ak}
(Appendix~\ref{app:Rayleigh}), the potential
\begin{equation}
\label{eq:potVk_0}
v_{k_0}(r)=\int \limits_{\partial D_0} \left(\partialDerivative{G_{j,k_0}^q}{\nu(r^\prime)}-i \gamma G_{j,k_0}^q\right) (r-r^\prime) \psi(r^\prime) dS(r^\prime)-\frac{i}{2L} \sum \limits_{n \not \in \mathcal{N}_{\mathrm{wa}}}  \sigma_n(k_0) \frac{I_{n,k_0}^+\left[\psi\right]}{\beta_n(k_0)} e^{i\alpha_n(k_0) x+i\beta_n(k_0) y}
\end{equation}
associated with $A_{k_0}$ is not necessarily a radiating solution and,
therefore, classical arguments based on uniqueness of radiating
solution for the associated PDE problem are not immediately applicable
in this context. A detailed consideration of this problem is out of
the scope of the present paper and is left for future work.
Throughout this article, however, it is assumed that, as verified
numerically (Section~\ref{sec:singularValues}), the operator $A_{k_0}$
is indeed invertible and, therefore, the conclusions of
Theorem~\ref{thm:Woodbury} hold.

\subsection{Solutions to the PDE problem~\eqref{PDE}--\eqref{TE} at
  and around Wood frequencies}
\label{theo_disc}

This section provides a brief discussion of equation~\eqref{eq:IESol},
and it uses the solutions to that equation (which, from
Theorem~\ref{thm:Woodbury}, are well defined for $k$ in the
neighborhood $U_{\mathrm{wa}} = \{k\in\mathbb{R}: |k- k_0|<\delta\}$
of a given Wood frequency $k_0\in\mathcal{K}_\mathrm{wa}$) to
construct solutions of the PDE~\eqref{PDE} for frequencies $k\in
U_{\mathrm{wa}}$--- including, in particular, the Wood frequency
$k=k_0$.

To do this we first note that the quantity $\psi_k$ in
equation~\eqref{eq:IESol} is the sum of the two terms
\begin{equation}
\label{eq:terms}
\psi_k^{(1)}=-A_k^{-1} u^{\mathrm{inc}}_k \quad \mbox{and}\quad \psi_k^{(2)}=A_k^{-1} \left( D_{\boldsymbol{\beta}(k)}+\tilde{T}_k\right)^{-1} \tilde{R}_k A^{-1}_k u^{\mathrm{inc}}_k,
\end{equation}
both of which involve the inverse $A_k^{-1}$ of the operator $A_k$. (A
theoretical discussion concerning the invertibility of the operator
$A_k$ is given at the end of Section~\ref{sec:Woodbury}, while
practical matters concerning actual inversion of $A_k$ are discussed
in Section~\ref{sec:numericalAlgorithm}.)

The term $\psi_k^{(1)}$ is obtained by a direct application of the
inverse operator $A_k^{-1}$. The evaluation of $\psi_k^{(2)}$ can be
viewed as a three-step process: (i)~Evaluation of $R_kA_k^{-1}
u_k^{\mathrm{inc}}$, (ii)~Application of the inverse
$(D_{\boldsymbol{\beta}(k)}+\tilde{T}_k)^{-1}$, and, finally,
(iii)~Application of $A_k^{-1}$. In view of
equation~\eqref{RDirichlet2} for point~(i) we have
\begin{equation}\label{tilde_R}
  \tilde{R}_k A^{-1}_k u^{\mathrm{inc}}_k = \sum \limits_{j=1}^r c_j w_j=\sum \limits_{j=1}^r c_j e^{i\alpha_{n_j}(k)x+i\beta_{n_j}(k) y},
\end{equation}
where $c_j=c_j(k) =-\frac{i}{2L} \sigma_{n_j}(k) I_{n_j,k}^+
\left[A_{k}^{-1} u^{\mathrm{inc}}_{k}\right]$. The inverse of the
finite-dimensional operator $D_{\boldsymbol{\beta}(k)}+\tilde{T}$
mentioned in point~(ii), on the other hand, can easily be applied to
$\tilde{R}_k A^{-1}_k u^{\mathrm{inc}}_k$ by first obtaining the
matrices $\mathcal{D}_{\boldsymbol{\beta}(k)}$ and $\mathcal{T}$ of
the operators $\mathcal{D}_{i,j}$ and $\mathcal{T}_{i,j}$ in the basis
$\left\{w_j=e^{i\alpha_{n_j}(k)x+i\beta_{n_j}(k) y}:j=1,\dots,r \right\}$
of the space $S$. We obtain
\begin{equation}
\label{eq:matrices}
  \mathcal{D}_{i,j}=\delta_i^j \beta_{n_j} \quad\mbox{and}\quad \mathcal{T}_{i,j}= -\frac{i}{2L} \sigma_{n_i}(k) I_{n_i,k}^+\left[A_{k}^{-1} w_j\right] \quad i,j=1,\dots,r.
\end{equation}
Thus, letting $c(k)=(c_1(k),\dots,c_r(k))^t\in \CC^r$ and calling
$d(k)=(d_1(k),\dots,d_r(k))^t \in \CC^r$ the solution of the linear system
\begin{equation}
\label{eq:d_coeffs}
(\mathcal{D}_{\boldsymbol{\beta}(k)}+\mathcal{T})d(k) = c(k),
\end{equation}
a straightforward linear algebra argument yields
\begin{equation}
\label{eq:finite_lc}
\left(D_{\boldsymbol{\beta}(k)}+\tilde{T}_k\right)^{-1}R_kA_k^{-1} u_k^{\mathrm{inc}} =\sum \limits_{j=1}^r d_j(k) w_j=\sum \limits_{j=1}^r d_j e^{i\alpha_{n_j}(k)x+i\beta_{n_j}(k)y}.
\end{equation}
(Note that for each $k$ a unique solution $d(k)$
of~\eqref{eq:d_coeffs} indeed exists and varies continuously with $k$
for $|k-k_0|$ sufficiently small, since, as indicated in
Theorem~\ref{thm:Woodbury}, $D_{\boldsymbol{\beta}(k)}+\tilde{T}_k$ is
invertible in a neighborhood of the Wood frequency $k_0$.)  Upon
application of $A^{-1}_k$ (point (iii)) we thus obtain the relation
\begin{equation}\label{eq:IESolDj}
\psi_{k} = -A_{k}^{-1}\left(u^{\mathrm{inc}}_{k}\right)+\sum \limits_{j=1}^r d_j(k) A_{k}^{-1} w_j.
\end{equation}
In other words, for $k\in U_{\mathrm{wa}}$ the density $\psi_{k}$ can
be viewed as the solution of an operator equation involving the
operator $A_{k}$ which is then corrected by addition of finitely many
terms which involve the Rayleigh modes
$w_j=e^{i\alpha_{n_j}(k)x+i\beta_{n_j}(k)y}$.

Once $\psi_{k}$ has been obtained as indicated above, the
corresponding radiating solution $u^{\mathrm{s}}_k$ of the PDE
problem~\eqref{PDE}--\eqref{TE} can be produced for $k\in
U_{\mathrm{wa}}$. While for $k\in U_{\mathrm{wa}}\setminus\{k_0\}$
(and, indeed, for all $k\not\in\mathcal{K}_\mathrm{wa}$) a direct
substitution $\psi=\psi_k$ in equations~\eqref{eq:potRepGj+}
and~\eqref{eq:potRepGj-} yields the desired PDE solution
$u^{\mathrm{s}}_k$, for all $k\in U_{\mathrm{wa}}$ we proceed
differently: as shown in what follows, for such $k$ values the
singular (or nearly singular) quotients
\begin{equation}\label{eq:sing_term}
  \frac{I_{n_j,k}^\pm[\psi_k]}{\beta_{n_j}(k)} \quad (n_j \in \mathcal{N}_{\mathrm{wa}}(k_0))
\end{equation}
in those two equations are re-expressed in terms of quantities that
are well-behaved at and around $k_0$, namely, the coefficients
$d_j(k)$ in equation~\eqref{eq:d_coeffs} and the functionals
$\tilde{I}_{n,k}$ defined in equation~\eqref{eq:functionalITilde}
below.

To do this let us consider the ``plus'' quotients first. Noting that
$\psi_k$ is a solution of~\eqref{operatorEquation} we obtain the
expression
\begin{equation}
R_k \psi_k =-u^{\mathrm{inc}}_k-A_k \psi_k
\end{equation}
which, using equation~\eqref{eq:IESolDj}, becomes
\begin{equation}
\label{eq:R_kPsi_k}
R_k\psi_k=\sum \limits_{j=1}^r d_j(k) e^{i\alpha_{n_j}(k)x+i\beta_{n_j}(k)y}.
\end{equation}
In view of~\eqref{RDirichlet} it follows that for all $k \in
U_{\mathrm{wa}}$ the ``plus'' singular quotient in~\eqref{eq:sing_term}
can be expressed in the form
\begin{equation}
\label{lim_plus}
-\frac{i}{2L} \sigma_n(k) \frac{I_{n_j,k}^+\left[\psi_k\right]}{\beta_{n_j}(k)}=d_j(k) , \quad j=1,\dots,r
\end{equation}
in terms of the ``regular'' quantities $d_j = d_j(k)$. 

The ``minus'' quotients, in turn, can be expressed in terms of the
``plus'' quotients. Indeed, subtracting~\eqref{InMinus}
from~\eqref{InPlus} we obtain
\begin{equation}
  I_{n,k}^+[\psi]-I_{n,k}^-[\psi]= -2i \beta_n(k) \tilde{I}_{n,k}[\psi]
\end{equation}
where
\begin{equation}
\label{eq:functionalITilde}
\tilde{I}_{n,k}[\psi]=\int \limits_{\partial D_0} \left( \partialDerivative{}{\nu(r^\prime)} \left( e^{-i\alpha_n(k) x^\prime} y^\prime \mbox{sinc}(\beta_n(k) y^\prime) \right) -i\gamma e^{-i\alpha_n(k) x^\prime} y^\prime \mbox{sinc}(\beta_n(k) y^\prime) \right) \psi(r^\prime)dS(r^\prime),
\end{equation}
and where $\mbox{sinc}(t)=\frac{\sin(t)}{t}$. Thus, in view
of~\eqref{lim_plus} we obtain the expression
\begin{equation}\label{lim_minus}
\frac{i}{2L} \frac{I_{n_j,k}^- [\psi_{k}]}{\beta_{n_j}(k)}=-\frac{d_j(k)}{\sigma_{n_j}(k)} -\frac{1}{L} \tilde{I}_{n_j,k_0} [\psi_{k_0}], \quad j=1,\dots,r
\end{equation}
for the ``minus'' quotients in terms of regular quantities.

The scattered-field functions~\eqref{eq:potRepGj+}
and~\eqref{eq:potRepGj-} can now be evaluated in terms of the regular
expressions~\eqref{lim_plus} and~\eqref{lim_minus}. Indeed, using the
solution $d_j(k)$ of equation~\eqref{eq:d_coeffs} and the integers
$n_j$ ($j=1,\dots,r$) introduced in Remark~\ref{rem:WA_def} we obtain
\begin{equation}
\label{eq:potRepGj+WA}
\begin{split}
u^{\mathrm{s}}_{k}(r)=\int \limits_{\partial D_0} \left(\partialDerivative{G_{j,k}^q}{\nu(r^\prime)}-i \gamma G_{j,k}^q\right) (r-r^\prime) \psi_{k}(r^\prime) dS(r^\prime)&-\frac{i}{2L}\sum \limits_{n \not \in \mathcal{N}_{\mathrm{wa}}}  \sigma_n(k) \frac{I_{n,k}^+\left[\psi_{k}\right]}{\beta_n(k)} e^{i\alpha_n(k) x+i\beta_n(k) y} \\&+ \sum \limits_{j=1}^r d_j(k) e^{i\alpha_{n_j}(k) x+i\beta_{n_j}(k)y}
\end{split}
\end{equation}
in $\Omega^+=\{y\geq M^--h\}\setminus D$ (see Figure~\ref{problemGeometry}); and
\begin{equation}
\label{eq:potRepGj-WA}
u^{\mathrm{s}}_{k}(r)=\frac{i}{2L}\sum \limits_{n\not \in \mathcal{N}_{\mathrm{wa}}}  \frac{I_{n,k}^-\left[\psi_k\right]}{\beta_n(k)} e^{i\alpha_n(k) x -i\beta_n(k) y} + \sum \limits_{j=1}^r \left(d_j(k) -\frac{1}{L} \tilde{I}_{n_j,k}[\psi_{k}]\right) e^{i\alpha_{n_j}(k) x-i\beta_{n_j}(k)y}
\end{equation}
in $\Omega^-=\{y<M^--h\}$.  Note that~\eqref{eq:potRepGj+WA}
and~\eqref{eq:potRepGj-WA} are defined for all values of $k \in
U_{\mathrm{wa}}$---including $k=k_0$.

To conclude this Section we establish that, as claimed, the
Wood-anomaly potential $u^{\mathrm{s}}_{k_0}$ given by
equations~\eqref{eq:potRepGj+WA} and~\eqref{eq:potRepGj-WA} is a
radiating solution of the problem~\eqref{PDE}--\eqref{TE}. To do this
it suffices to show that
\begin{enumerate}
\item\label{pt1} $u^{\mathrm{s}}_{k_0}$ is a solution of the Helmholtz equation
  in the interior of the regions $\Omega^+$ and $\Omega^-$;
\item\label{pt2} The right-hand expressions in~\eqref{eq:potRepGj+WA}
  and~\eqref{eq:potRepGj-WA} agree in the region $M^- - h <y <M^-$;
\item\label{pt3} $u^{\mathrm{s}}_{k_0}$ satisfies the condition of
  radiation at infinity in both $\Omega^+$ and $\Omega^-$ (see
  Remark~\ref{rem:Radiating});
\item\label{pt4} $u^{\mathrm{s}}_{k_0}$ verifies the boundary
  condition~\eqref{TE} or, equivalently, the boundary
  condition~\eqref{TE0}.
\end{enumerate}
The validity of point~\ref{pt1} follows directly by inspection of
equations~\eqref{eq:potRepGj+WA} and~\eqref{eq:potRepGj-WA}.  To
establish point~\ref{pt2}, in turn, we show that, in the region $M^- -
h <y <M^-$ the Rayleigh expansion of the right-hand
in~\eqref{eq:potRepGj+WA} coincides with the Rayleigh
series~\eqref{eq:potRepGj-WA}. To produce the Rayleigh series
of~\eqref{eq:potRepGj+WA} we use equation~\eqref{eq:potVk_0} with
$\psi=\psi_{k_0}$, which results in an expression of the form
\begin{equation}\label{eq:relationUandV}
  u^{\mathrm{s}}_{k_0}(x,y)=v_{k_0}(x,y) + \sum \limits_{j=1}^r d_j(k_0) e^{i\alpha_{n_j}(k_0) x};
\end{equation}
Point~\ref{pt2} now follows directly by considering the Rayleigh
expansion of $v_{k_0}$ (equation~\eqref{RayleighBelow} in
Appendix~\ref{app:Rayleigh}).  Point~\ref{pt3} follows directly by
inspection of equations~\eqref{eq:potRepGj-WA}
and~\eqref{eq:relationUandV}, using, for the latter, the appendix
equation~\eqref{RayleighAbove}.  Point~\ref{pt4}, finally, results by
evaluation of the potential $u^{\mathrm{s}}_{k_0}$
in~\eqref{eq:relationUandV} for points $(x,y)\in\partial D_0$
(cf. equation~\eqref{TE0} and associated text). The boundary values of
the potential $v_{k_0}$ in equation~\eqref{eq:relationUandV} can be
obtained by evaluating~\eqref{eq:potVk_0} (with $\psi=\psi_{k_0}$) on
$\partial D_0$; the result is~\eqref{opA} (with $\psi=\psi_{k_0}$) or,
in other words $v_{k_0} \big |_{\partial D_0}=A_{k_0}\psi_{k_0}$.  But
this quantity can be obtained from Theorem~\ref{thm:Woodbury} applied
at $k=k_0$ or, equivalently, from equation~\eqref{eq:IESolDj}. But,
then, equation~\eqref{eq:relationUandV} tells us that
$u^{\mathrm{s}}_{k_0}$ verifies the boundary condition~\eqref{TE}, as
desired. Having established that points~\ref{pt1} through~\ref{pt4}
hold, it follows that $u^{\mathrm{s}}_{k_0}$ is a radiating solution
of the PDE problem posed by equations~\eqref{PDE} and~\eqref{TE} at
the Wood frequency $k_0$, as desired.

\section{Numerical algorithm\label{sec:numericalAlgorithm}}
Section~\ref{sec:formul} presents an integral-equation framework for
evaluation of scattering solutions of the PDE
problem~\eqref{PDE}--\eqref{TE} for wavenumbers $k$ at and around a
given Wood wavenumber $k_0$. For values of $k$ away from all Wood
wavenumbers, in turn, the proposed algorithm resorts to use of the
windowed Green function approach introduced
in~\cite{BrunoShipman,BrunoDelourme,Monro}. In fact, following these
references, our algorithm applies the windowing approach in
conjunction with the shifted Green function method at all
wavenumbers. This section presents a numerical discretization of the
resulting continuous formulations.  We consider the near Wood anomaly
case first and we then succinctly describe the modifications that are
necessary to obtain an algorithm valid for all frequencies.

For values of $k$ at and around a given Wood frequency $k_0\in
\mathcal{K}_\mathrm{wa}$, the proposed algorithm for evaluation of the
density $\psi_k$ (that is used in
equation~\eqref{eq:potRepGj+WA}--\eqref{eq:potRepGj-WA}) results as a
discrete analog of the strategy embodied in
equation~\eqref{eq:IESol}---or, equivalently (but in a form more
closely related to the actual implementation)
equation~\eqref{eq:IESolDj}. The numerical integrations mentioned in
what follows can be effected by means of any numerical integration
method applicable to the kinds of smooth and logarithmic-singular
integrands associated with the problems under consideration. The
integration algorithms used in our implementation were derived in a
direct manner from the high-order pointwise-discretization (Nystr\"om)
methods described in~\cite[Sec. 3.5]{kressInverse}; in particular,
this presentation assumes a discretization of the boundary of the
scatterer $D_0$ by means of a given Nystr\"om mesh such as those
considered in~\cite{kressInverse}. Naturally, only one period of the
scattering scattering boundaries needs to be discretized; we assume a
number $n_i$ of Nystr\"om nodes are used to discretize the boundaries
contained in the reference unit period. Finally, the exponentially
convergent infinite sum in equation~\eqref{opA} is truncated by
including all propagating modes as well as a number $N_{ev}$ of
evanescent modes to the right (resp. to the left) of the largest
(resp. smallest) element of the set $\mathcal{N}$, excepting all Wood
frequency modes.

Away from Wood frequencies the evaluation of the shifted
quasi-periodic Green function $G^q_{j,k}$ needed to produce the
operator $A_k$ and its inverse, which is required
in~\eqref{eq:IESolDj} (see equation~\eqref{opA}), can be additionally
accelerated by means of the windowing methodology introduced
in~\cite{BrunoShipman,BrunoDelourme}---which smoothly truncates the
infinite sum~\eqref{quasiPerGj} in such a way that $G^q_{j,k}$ is
approximated by the finite sum
\begin{equation}
\label{def:G^A}
 G^A_{j,k}(x,y)=\sum \limits_{n \in \ZZ} e^{-i\alpha n L}G_{j,k}\left( k u(x+n L,y)\right) W \left( \frac{x+nL}{A}\right),
\end{equation}
where $A$ is a positive real number, and where $W:\RR \to \RR$ (see
Figure~\ref{window}) is the smooth cut-off function given by
\begin{equation}
  W(t)=\left\{ \begin{array}{ll}  1 \quad & \mathrm{if} \, \abs{t} \leq c 
\\\exp(\frac{2e^{-1/\abs{t}}}{\abs{t}-1}) \quad &\mathrm{if} \, c\leq \abs{t} \leq 1
\\ 0 \quad &\mathrm{if} \, \abs{t}\geq 1\end{array}
\right. 
\end{equation}
with $0<c<1$.
\begin{figure}[t]
 \centering
    \includegraphics[scale=0.6]{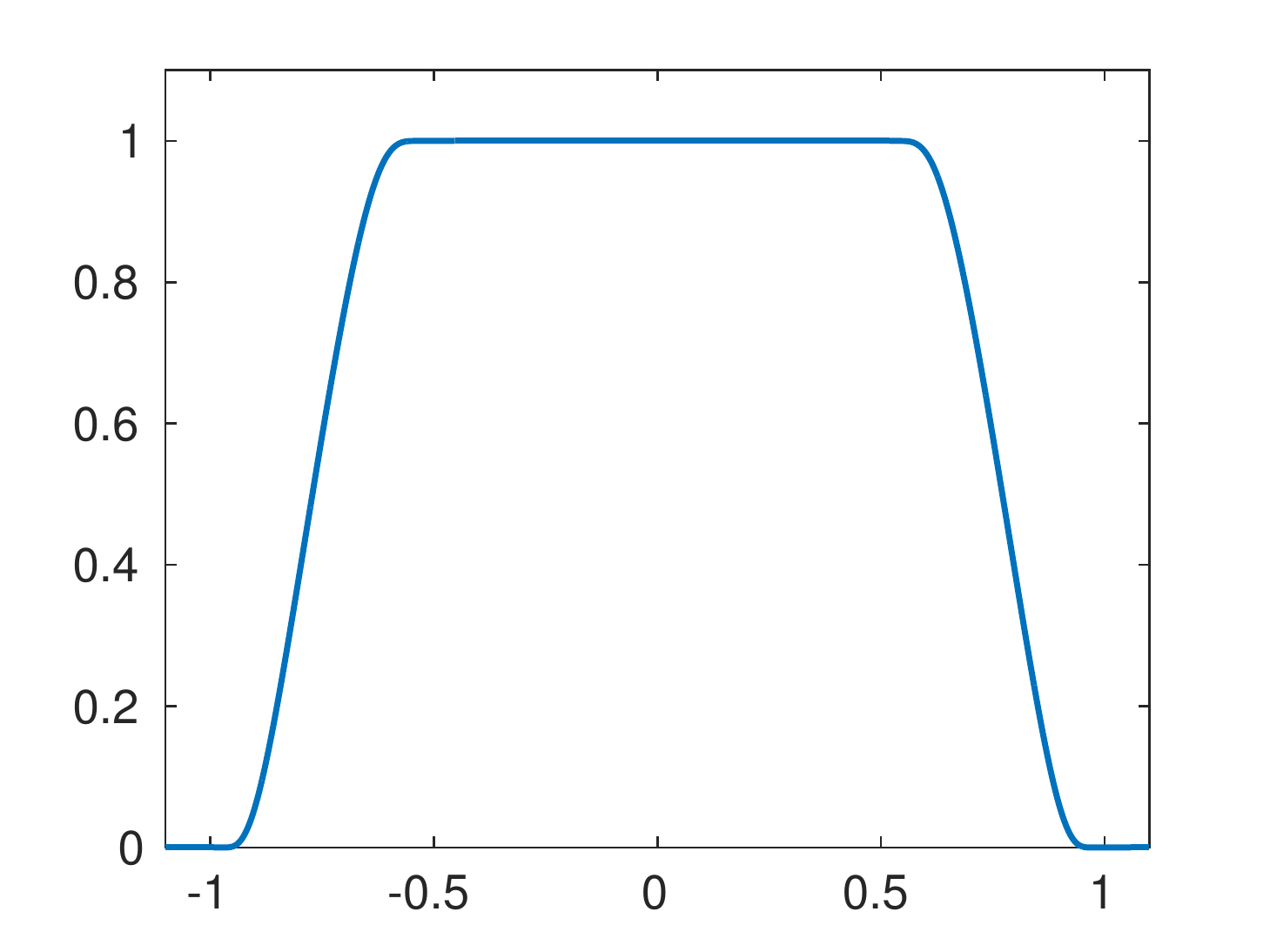}
\caption{Windowing function $W$ for $c=0.5$. \label{window}}
\end{figure}
As shown in~\cite{BrunoShipman} the smooth-windowing methodology
converges super-algebraically fast away from Wood anomalies as $A\to
\infty$, and thus this approach improves upon the shifted Green
function convergence---at least away from Wood
frequencies. Reference~\cite{BrunoShipman} establishes super-algebraic
convergence to the corresponding quasi-periodic Green function in the
context of the problem of scattering by bi-periodic structures in
three-dimensional space. Similar ideas can be applied in the
two-dimensional problem under consideration. In fact the proof is
simpler in the present case---which does not require summation of
doubly-infinite sums.

We can now consider the problem of evaluation of the operator $A_k$,
whose inverse appears on the right-hand side
of~\eqref{eq:IESolDj}. Clearly, using~\eqref{opA} and the values of a
given continuous function $\psi$ on the Nystr\"om mesh, we can obtain
the numerical values of $A_k\psi$ on the same Nystr\"om mesh by
numerical integration of the integral terms containing the Green
function as well as those associated with the functionals $I^+_{n,k}$,
followed by summation of the infinite sum over
$n\not\in\mathcal{N}_\mathrm{wa}$ (whose general term decays
exponentially, in accordance with~\eqref{contA_k_eqn}). Clearly, such
an algorithm can be implemented in terms of a matrix-vector product
for the vector which contains the discrete values of the function
$\psi$. The inverse of the corresponding matrix provides the necessary
discrete approximation of the operator $A_k^{-1}$. (In practice the
action of the numerical inverse on a given vector can be obtained
either by means of an iterative linear-algebra solver or, as in the
approach followed in the present two-dimensional context, directly by
means of Gaussian elimination.)

In order to complete the evaluation of the solution $\psi_k$
in~\eqref{eq:IESolDj} it is necessary to produce the the coefficients
$d_j(k)$. But these values can be obtained by solving the $r\times r$
linear system~\eqref{eq:d_coeffs} (cf. Remark~\ref{rem:WA_def}). The
necessary elements in this matrix equation can be produced as follows:
the diagonal matrix $\mathcal{D}_{\boldsymbol{\beta}(k)}$ results from
simple algebra, and the matrix $\mathcal{T}$
(equation~\eqref{eq:matrices}) and right-hand side $c(k)$ (right below
equation~\eqref{tilde_R}) can be produced through respective
applications of the aforementioned matrix form of the operator
$A_k^{-1}$ followed by numerical integration and simple algebra.

The case in which $k$ is away from Wood anomalies, finally, can be
treated by means of a slightly modified version of the ``Wood and
near-Wood'' strategy described above---since, as it is readily
checked, the operator $A_k$ in equation~\eqref{opA} and the mixed
integral/Rayleigh-series operator in equation~\eqref{intEqGj} differ
only by the finite sum of terms with
$n\in\mathcal{N}_{\mathrm{wa}}$. Thus, for a configuration away from
Wood anomalies it is only necessary to incorporate those terms which
were excluded to obtain $A_k$ in the near Wood-anomaly case. (Note,
however, that, away from Wood frequencies on may select $j=0$
in~\eqref{intEqGj}, in which case the sum on the left-hand side of
this equation vanishes, and a classical formulation is recovered.) In
any case, the resulting discrete formulations can be inverted by means
of an iterative solver or, for sufficiently small discretizations, by
means of Gaussian elimination.

\section{Numerical results\label{sec:numericalResults}}

This section presents numerical tests and examples that demonstrate
the character of the scattering solvers introduced in the previous
section, with an emphasis on the impact of the Wood-anomaly methods
described in Section~\ref{sec:Woodbury}.  Detailed numerical results
presented in what follows concern, in particular, the invertibility of
the operator $A_k$ around Wood frequencies (see
Theorem~\ref{thm:Woodbury} and discussion immediately thereafter), as
well as the conditioning, accuracy and computing costs associated with
the proposed methodology for ranges of frequencies which, once again,
include Wood anomalies.  All solutions presented in this section were
produced by means of a Fortran implementation of the algorithms
described in Section~\ref{sec:numericalAlgorithm}, together with the
LAPACK Gaussian elimination routines, on a single core of an Intel
i7-4600M processor.

\begin{figure}[H]
 \centering
 \includegraphics[scale=0.4]{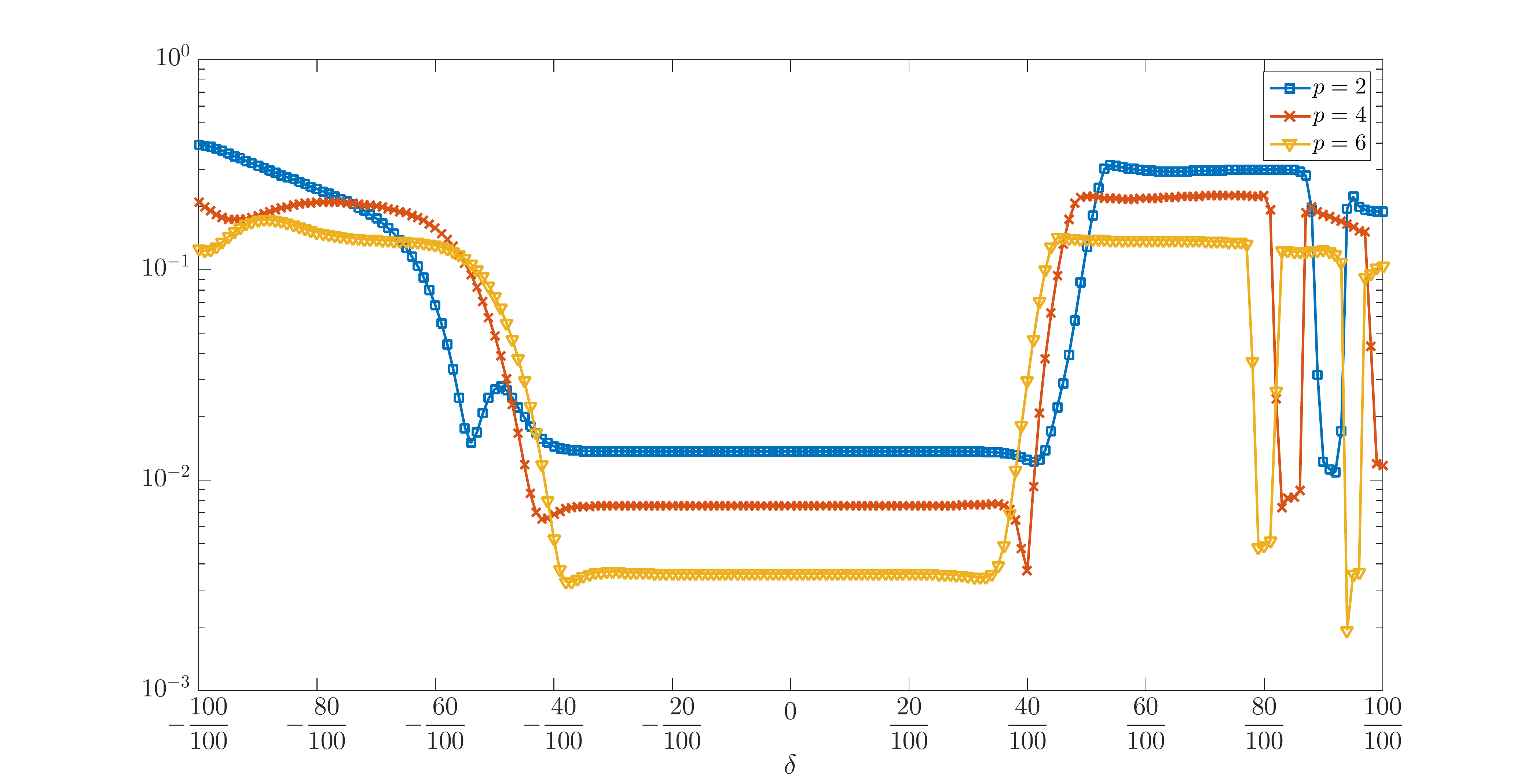}
 \caption{Smallest singular values of the discrete approximation
   $\tilde A_k$ of the operator $A_k$, for the scattering structure
   described in the text, as a function of $\delta$ ($k = k_0 +
   \delta^8$), around the Wood frequencies $k_0=p
   \frac{2\pi}{L(1+\sin(\theta))}$ with $p=2,4,6$. (For reference, the
   maximum condition numbers $\kappa =
   \sigma_\mathrm{max}/\sigma_\mathrm{min}$ for the cases $p=2$, $4$
   and $6$ are approximately $280$, $1508$ and $2401$, respectively.)
   Note that, in particular, the smallest singular values for
   $\delta^8 = 10^{-16}$ and $\delta =0$
   ($k_0\in\mathcal{K}_{\mathrm{wa}}$) are included in this graph.}
    \label{fig:results_1}
\end{figure}
\subsection{Invertibility and conditioning at and around Wood
  frequencies}\label{sec:singularValues}

In order to provide numerical evidence of the invertibility of the
operator $A_k$ (equation~\eqref{opA}) for $k$ in a neighborhood of
$k_0 \in\mathcal{K}_{\mathrm{wa}}$ we consider the maximum and minimum
singular values for a discretization of the operator $A_k$ based on
$n_i=64$ Nystr\"om nodes (see Section~\ref{sec:numericalAlgorithm}),
with varying values of both, the window-size parameter $A$ and the
number $j$ of shifts, and with $N_{ev} = 20$ (see
Section~\ref{sec:numericalAlgorithm}). We have found that, in all
cases the maximum singular values of $A_k$ do not grow as $k$
approaches $k_0$, and that the minimum singular values are all bounded
away from zero. For definiteness we present examples for the fairly
generic test geometry depicted in Figure~\ref{problemGeometry} with
$L=4$ and $\theta=0$; similar results were obtained for periodic
arrays of cylinders of various cross sections and for other numbers
$n_i$ of discretization points.

Figure~\ref{fig:results_1} displays the smallest singular values of
the discrete approximation $\tilde A_k$ of the operator $A_k$ in
equation~\eqref{opA} obtained using $A=1600\cdot L$ together with a
$64$-point discretization of the scatterer $D_0$ depicted in
Figure~\ref{problemGeometry}. In order to provide close refinements
near several Wood anomalies we use the parameters $\delta$ defined by
$k = k_0 + \mathrm{sign}(\delta)\cdot\delta^8$ for $k_0=p
\frac{2\pi}{L(1+\sin(\theta))}\in\mathcal{K}_{\mathrm{wa}}$ with
$p=2,4,6$. (The $\delta$ parametrization of the frequency domain is
used to achieve a fine resolution near the Wood anomaly values
corresponding to $\delta=0$ for each one of the integers $p=2,4,6$.)
Visually indistinguishable curves for both smallest and largest
singular values are obtained using a 128-point discretization of $D_0$
(while fixing the value $A=1600 L$), which suggests that the
approximate singular values provide close approximations of the
corresponding smallest and largest singular values of the continuous
operator $A_k$. Clearly, the smallest singular value
$\sigma_\mathrm{min}$ remains bounded in the limit as $\delta \to 0$.

Figure~\ref{engy_bal} presents the energy balance errors (given by the
the left-hand side of equation~\eqref{engy_bal_eqn} for efficiencies
values $e^\pm_n$ computed numerically with $n_i=64$) as a function of
$\delta$, with window-size $A=LN_{per}$ and for two values of the
shift-parameter $j$ (equations~\eqref{Gj_def} and~\eqref{quasiPerGj});
a shifted Green function with a number $j=5$ of shifts and spacing
$h=3.5$ was used in all cases. Studies based on use of finer
integration meshes and larger window sizes suggest that the actual
solution errors are well described by the energy-balance curves
presented in this figure. This figure demonstrates clearly the
beneficial effect induced by the presence of the Green-function
shifts.
\begin{figure}[ht]
\begin{center}
    \includegraphics[scale=0.4]{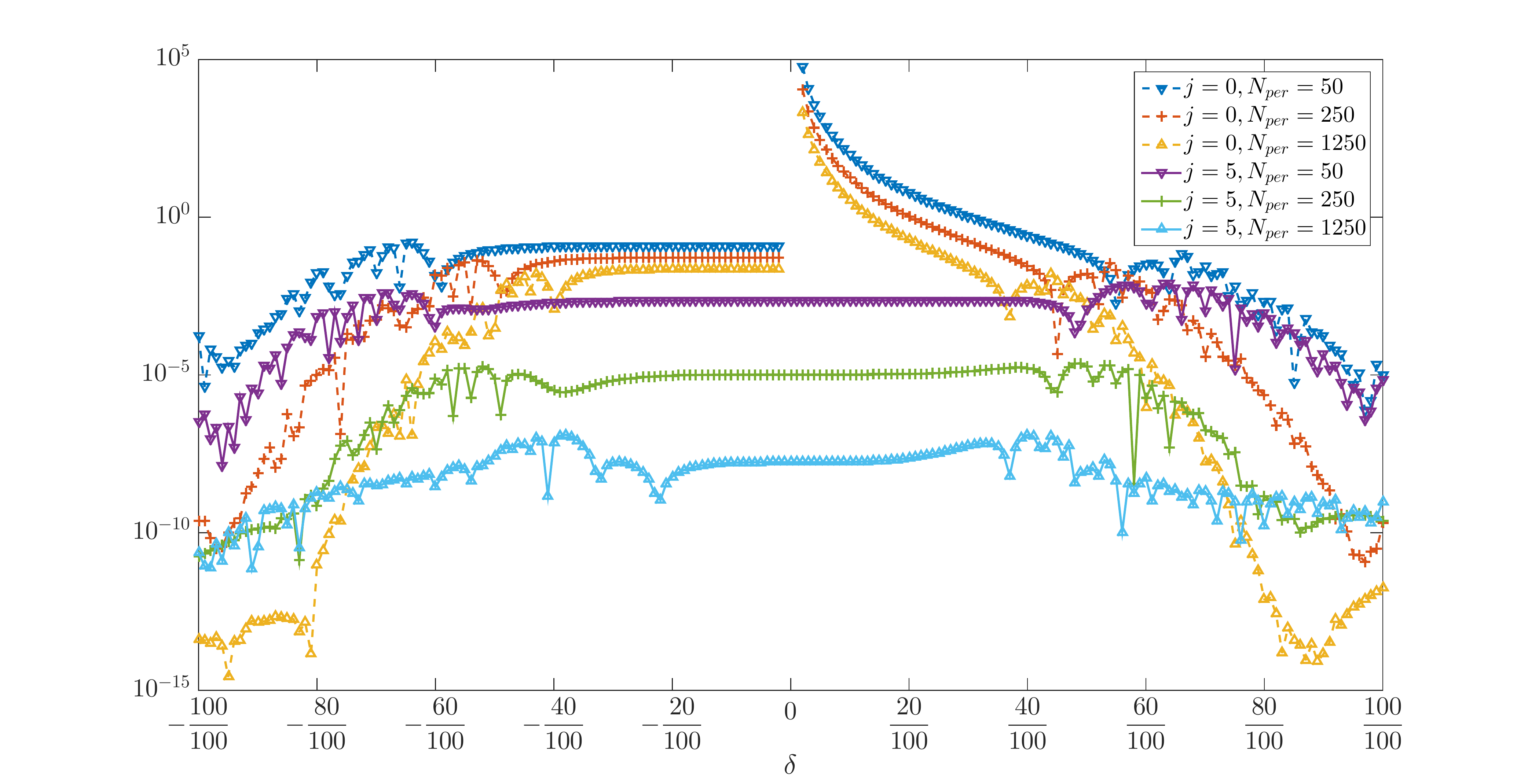}
\end{center}
\caption{Energy balance error, as a function of $\delta$
  ($k=k_0+\delta^8$ with $k_0=\frac{2\pi}{L(1+\sin(\theta))}$),
  resulting from use of the methodology introduced in
  Section~\ref{sec:Woodbury} with $j=0$ (un-shifted Windowed Green
  function) and $j=5$ (five-shift Windowed Green function) for a
  configuration with $L=4$ and $\theta=0$ (normal incidence). The
  beneficial effect induced by the use of the shifted Green-function
  and Sherman-Morrison inversion (cf. Theorem~\ref{thm:Woodbury}) can
  be clearly appreciated. \label{engy_bal}}
\end{figure}

% \begin{figure}[H]
%  \centering
%     \includegraphics[scale=0.4]{minSVD6}
%     \includegraphics[scale=0.4]{maxSVD6}
%     \caption{Smallest (left) and largest (right) singular values of
%       discretizations of the operator $A_k$ in equation~\eqref{opA}
%       (blue curve) and of the unregularized operator in
%       equation~\eqref{intEqGj} (red curve) for the smooth star-shaped
%       curve $C$ given in equation~\eqref{eq:curve} as functions of the
%       frequency $k$ around a Wood frequency $k_0$.  The scattering
%       setup is given by $L=4$, $\theta=0$ and
%       $k_0=\frac{2\pi}{d(1+\sin(\theta))}$}
%     \label{fig:results_2}
% \end{figure}

\subsection{Computing cost}\label{sec:convergence}
The Green-function representations and solvers presented in this paper
give rise to fast numerical algorithms: like the rough-surface
solvers~\cite{BrunoDelourme}, the present methods for periodic arrays
of scatterers enable evaluation of highly-accurate scattering
solutions, with frequencies away, at and around Wood anomalies, in
computing times of the order of hundreds of milliseconds. In
particular, the approach is highly competitive with other available
solvers even for configurations away from Wood
anomalies~\cite{BarnettGreengard1,BrunoHaslam1}. The method can be
additionally accelerated by means of FFT-based
approaches~\cite{BrunoKunyansky,BrunoShipman}; use of such
acceleration methods in conjunction with shifted Green functions will
be described elsewhere.

Sections~\ref{sec:costAway}, \ref{sec:costAt} and \ref{sec:costAround}
present computing times required by the present solver for problems of
scattering by periodic arrays (period $L=2\pi$) of circular cylinders.
Examples for arrays of cylinders of radii $R=0.05 L$, $R=0.1 L$ and
$R=0.25 L$, and configurations far from Wood anomalies, at Wood
anomalies and around Wood anomalies are considered. Incidence angles
in Littrow mount of order $\ell=-1$ (for which the scattered
plane-wave of order $\ell=-1$ propagates in the backscattering
direction) were used; such a configuration is obtained provided the
triplet $(k,L,\theta)$ verifies the relation $kL\sin(\theta)=\pi$. As
noted in the previous section, Wood frequencies can be obtained by
enforcing the relation $k_0=\frac{2\pi}{L(1+\sin(\theta))} n$ for some
positive integer $n$; it can be easily checked that under the
Littrow-mount assumption, the Wood-frequency condition reduces to $k_0
L =(2n-1) \pi$ for some positive integer $n$. In what follows we
consider the particular case $n=2$, which yields the Wood frequency
$k_0 =1.5 \cdot \frac{2\pi}{L}$ (for which
$\mathcal{N}_\mathrm{wa}=\{1,-2\}$; see Remark~\ref{rem:WA_def}). In
particular, $k=1\cdot\frac{2\pi}{L}$ is not a Wood frequency while the
frequency $k=1.49\cdot\frac{2\pi}{L}$ is close to the Wood frequency
$k_0$.

Throughout this section the computing times cited sufficed to produce
full scattering solutions with an energy balance error of the order of
$10^{-8}$.
\subsubsection{Computing cost I: wave-numbers away from Wood anomalies}
\label{sec:costAway}
Tables~\ref{tab:nonAnomalousCircle0.05},~\ref{tab:nonAnomalousCircle0.1}
and~\ref{tab:nonAnomalousCircle0.25} present the computing times
required by the proposed algorithm to evaluate the scattered field
with an energy-balance error of the order of $10^{-8}$ for the
wavenumber $k=1\cdot L$ (away from Wood anomalies).

\begin{table}[H]
\centering
\caption{Cylinders of radius $R=0.05 L$ at the frequency $k=1\cdot \frac{2\pi}{L}$. \label{tab:nonAnomalousCircle0.05}}

\begin{tabular} { l l l l l l l l }
  \hline
  $j$ &0 & 1    & 2    & 3    & 4    \\ \hline
  $n_i$&18 & 16   & 18   & 18   & 18   \\
  $N_{per}$&100 & 30   & 45   & 24   & 22   \\
  $N_{ev}$&0 & 18   & 18   & 16   & 16   \\
  Time (s)&0.06 & 0.04 & 0.08 & 0.08 & 0.07 \\ \hline
\end{tabular}
\end{table}

\begin{table}[H]
\centering
\caption{Cylinders of radius $R=0.1 L$ at the frequency $k=1\cdot\frac{2\pi}{L}$.\label{tab:nonAnomalousCircle0.1}}
\begin{tabular} { l l l l l l l l }
  \hline
  \multicolumn{1}{l|}{$j$} & 0 &1    & 2    & 3    & 4    \\ \hline
  $n_i$                   & 18 & 16   & 16   & 16   & 18   \\
  $N_{per}$             & 125 & 38   & 38   & 37   & 36   \\
  $N_{ev}$              & 0 &20   & 20   & 20   & 20   \\
  Time (s)               & 0.1 &0.07 & 0.07 & 0.08 & 0.09 \\ \hline
\end{tabular}
\end{table}

\begin{table}[H]
  \caption{Cylinders of radius $R=0.25 L$ at the frequency $k=1 \cdot\frac{2\pi}{L}$. \label{tab:nonAnomalousCircle0.25}}
\centering
\begin{tabular} { l l l l l l l l }
\hline
$j$          &0& 1    & 2    & 3    & 4    \\ \hline
$n_i$       &20& 14   & 14   & 14   & 16   \\
$N_{per}$ &125& 66   & 75   & 73   & 58   \\
$N_{ev}$  &0& 10   & 10   & 10   & 10   \\
Time (s)   &0.13& 0.06 & 0.09 & 0.11 & 0.13 \\ \hline
\end{tabular}
\end{table}

\subsubsection{Computing cost II: a Wood anomaly frequency }\label{sec:costAt}
Tables~\ref{tab:nonAnomalousCircle0.25atWA},~\ref{tab:nonAnomalousCircle0.1atWA}
and~\ref{tab:nonAnomalousCircle0.25atWA} present results for the
structures considered in the previous section except for the
frequency, which is here taken to equal the Wood frequency $k=1.5\cdot
L$. The best computing costs are now $0.14$, $0.6$ and $2.6$
seconds. These times are thus somewhat higher than the corresponding
$0.04$, $0.07$ and $0.06$ times required for the non-Wood frequency
$k=1\cdot L$.
\begin{table}[H]
  \centering
\caption{Cylinders of radius $R=0.05 L$ at the Wood frequency $k=1.5\cdot \frac{2\pi}{L}$. \label{tab:nonAnomalousCircle0.05atWA}}
\begin{tabular}{ l l l l l l l l }
\hline
$j$        & 1    & 2    & 3    & 4    & 5    & 6    \\ \hline
$n_i$    & 18   & 20   & 16   & 18   & 18   & 18   \\
$N_{per}$   & 450  & 600  & 100  & 100   & 50   & 30   \\
$N_{ev}$    & 20   & 20   & 20   & 20   & 20   & 20   \\
Time (s) & 0.62 & 0.8 & 0.25 & 0.31 & 0.19 & 0.14 \\ \hline
\end{tabular}
\end{table}

\begin{table}[H]
\centering
\caption{Cylinders of radius $R=0.1 L$ at the Wood frequency $k=1.5\cdot \frac{2\pi}{L}$.\label{tab:nonAnomalousCircle0.1atWA}}

\begin{tabular}{ l l l l l l l l }
\hline
$j$       &  1    & 2    & 3    & 4    & 5   & 6   \\ \hline
$n_i$      & 18   & 18   & 16   & 18   & 18  & 18  \\
$N_{per}$  & 3500 & 5500 & 500  & 400  & 200 & 200 \\
$N_{ev}$  & 20   & 20   & 20   & 20   & 20  & 20  \\
Time (s) & 2.35 & 3.96 & 0.7 & 0.6 & 0.7 & 0.8 \\ \hline
\end{tabular}
\end{table}

\begin{table}[H]
\centering
\caption{Cylinders of radius $R=0.25 L$ at the Wood frequency $k=1.5\cdot \frac{2\pi}{L}$.\label{tab:nonAnomalousCircle0.25atWA}}

\begin{tabular} { l l l l l l l l }
\hline
$j$       &    1   & 2     & 3    & 4    & 5    & 6   \\ \hline
$n_i$     & 20    & 20    & 20   & 20   & 20   & 20  \\
$N_{per}$ & 38500 & 42000 & 2500 & 2000 & 1000 & 750 \\
$N_{ev}$   & 20    & 20    & 20   & 20   & 20   & 20  \\
Time (s) & 32.1  & 35.5  & 4.5  & 4.5  & 2.6  & 2.6 \\ \hline
\end{tabular}
\end{table}

\subsubsection{Computing cost III: frequencies near Wood
  anomalies} \label{sec:costAround}
Tables~\ref{tab:nonAnomalousCircle0.05aroundtWA},~\ref{tab:nonAnomalousCircle0.1aroundtWA}
and~\ref{tab:nonAnomalousCircle0.25aroundtWA} present results for the
structures considered in the previous two sections, but now for the
near-Wood frequency $k=1.49\cdot L$. The computing times are
comparable to Wood-frequency times presented in the previous section.
\begin{table}[H]
\centering
\caption{Cylinders of radius $R=0.05 L$ at the frequency $k=1.49\frac{2\pi}{L}$.\label{tab:nonAnomalousCircle0.05aroundtWA}}
\begin{tabular}{ l l l l l l l l }
% {p{2cm}p{2cm} p{2cm}p{2cm}p{2cm}p{2cm}p{2cm}p{1cm}}
\hline
$j$       &0 & 1    & 2   & 3    & 4   & 5    & 6    \\ \hline
$n_i$     &18 & 16   & 16  & 16   & 16  & 18   & 18   \\
$N_{per}$ &3700 & 850  & 650 & 200  & 200 & 100   & 75   \\
$N_{ev}$ & 0 & 20   & 20  & 20   & 20  & 20   & 20   \\
Time (s)&3.2 & 0.59 & 0.5 & 0.36 & 0.4 & 0.37 & 0.32 \\ \hline
\end{tabular}
\end{table}

\begin{table}[H]
\centering
\caption{Cylinders of radius $R=0.1 L$ at the frequency $k=1.49\frac{2\pi}{L}$. \label{tab:nonAnomalousCircle0.1aroundtWA}}
\begin{tabular}{ l l l l l l l l }
\hline
$j$     & 0  & 1    & 2    & 3    & 4    & 5    & 6   \\ \hline
$n_i$   & 18  & 18   & 18   & 18   & 18   & 18   & 18  \\
$N_{per}$& 3700 & 900  & 1000 & 350  & 320  & 175  & 100 \\
$N_{ev}$ & 0  & 20   & 20   & 20   & 20   & 20   & 20  \\
Time (s) & 3.2 & 0.76 & 0.91 & 0.46 & 0.49 & 0.47 & 0.4 \\ \hline
\end{tabular}
\end{table}

\begin{table}[H]
\centering
\caption{Cylinders of radius $R=0.25 L$  at the frequency $k=1.49\frac{2\pi}{L}$.\label{tab:nonAnomalousCircle0.25aroundtWA}}

\begin{tabular}{ l l l l l l l l }
\hline
$j$      & 0 & 1    & 2    & 3    & 4    & 5    & 6    \\ \hline
$n_i$    & 20 & 20   & 20   & 20   & 20   & 20   & 20   \\
$N_{per}$ &3700 & 1200 & 1000 & 600  & 600  & 400  & 380  \\
$N_{ev}$  &0 & 10   & 10   & 20   & 20   & 20   & 20   \\
Time (s) &3.2 & 1.21 & 1.14 & 1.05 & 0.99 & 1.07 & 1.56 \\ \hline
\end{tabular}
\end{table}

\section{Conclusions}
This paper introduced a new methodology for solutions of problems of
scattering by periodic arrays of cylinders with applicability
throughout the spectrum---even at and around Wood frequencies. To the
best of our knowledge, this is the first particle-array
periodic-Green-function method that remains applicable around Wood
frequencies. The algorithm yields fast solutions for frequencies in
the resonance region, where wavelengths are comparable to the
structural period. High-frequency problems are also amenable to
efficient treatment by these methods in conjunction FFT-based
acceleration approaches~\cite{BrunoKunyansky,BrunoShipman}; the
development of such accelerated methods, however, is left for future
work.
\section*{Acknowledgements}
The authors gratefully acknowledge support by NSF and AFOSR through
contracts DMS-1411876 and FA9550-15-1-0043, and by the NSSEFF Vannevar
Bush Fellowship under contract number N00014-16-1-2808.

\appendix
\section{Rayleigh expansion of $v_{k_0}$ for $k_0\in \mathcal{K}_{\mathrm{wa}}$}
\label{app:Rayleigh}
The Rayleigh expansion of $v_{k_0}$ can be obtained by substituting
the Rayleigh series of the shifted Green function $G_{j,k_0}^q$ for
$k_0\in \mathcal{K}_{\mathrm{wa}}$ in equation~\eqref{eq:potVk_0}. In
the case $j\geq 2$ we have~\cite[eqs. (54), (56)]{BrunoDelourme}
\begin{equation}
G_{j,k_0}^q(X,Y)=\frac{i}{2L} \sum \limits_{n\not \in \mathcal{N}_{\mathrm{wa}}} \frac{(1-e^{i\beta_n(k_0)})^j}{\beta_n(k_0)} e^{i\alpha_n(k) X+i\beta_n(k) Y}
\end{equation}
for $Y>0$, while, for $-h<Y<0$, the corresponding modified version of
equation~\cite[eq. (54)]{BrunoDelourme} yields
\begin{equation}
G_{j,k_0}^q(X,Y)=\frac{i}{2L} \sum \limits_{n\not \in \mathcal{N}_{\mathrm{wa}}} \frac{e^{i\alpha_n(k) X-i\beta_n(k) Y}}{\beta_n(k_0)} +  \frac{(1-e^{i\beta_n(k_0)})^j-1}{\beta_n(k_0)} e^{i\alpha_n(k) X+i\beta_n(k) Y} 
+\frac{1}{L}\sum \limits_{n\in \mathcal{N}_{\mathrm{wa}}}  Y e^{i\alpha_n(k_0)X }.
\end{equation}
The analogous expressions for $j=1$ are
\begin{equation}
G_{j,k_0}^q(X,Y)=\frac{i}{2L} \sum \limits_{n\not \in \mathcal{N}_{\mathrm{wa}}} \frac{(1-e^{i\beta_n(k_0)})^j}{\beta_n(k_0)} e^{i\alpha_n(k) X+i\beta_n(k) Y}+\frac{h}{2L}\sum \limits_{n \in \mathcal{N}_{\mathrm{wa}}}  e^{i\alpha_n(k_0) X}
\end{equation}
for $Y>0$ and
\begin{equation}
G_{j,k_0}^q(X,Y)=\frac{i}{2L} \sum \limits_{n\not \in \mathcal{N}_{\mathrm{wa}}} \frac{e^{i\alpha_n(k) X-i\beta_n(k) Y}}{\beta_n(k_0)} +  \frac{(1-e^{i\beta_n(k_0)})^j-1}{\beta_n(k_0)} e^{i\alpha_n(k) X+i\beta_n(k) Y} 
+\frac{1}{L}\sum \limits_{n\in \mathcal{N}_{\mathrm{wa}}} (\frac{h}{2}+ Y) e^{i\alpha_n(k_0)X } 
\end{equation}
% Using~\eqref{InMinus}, in turn, it follows that in the region $y<M^-$
% we have
% \begin{equation}
% \label{RayleighBelow_k}
% v_k(x,y)= \frac{i}{2L}\sum\limits_{n\not \in \mathcal{N}_\mathrm{wa}} \frac{I_{n,k}^-[\psi]}{\beta_n(k)} e^{i\alpha_n(k) x-i \beta_n(k) y}+ \frac{i}{2L}\sum\limits_{n  \in \mathcal{N}_\mathrm{wa}} \frac{e_n(k)}{\beta_n(k)} I_{n,k}^+[\psi] e^{i\alpha_n(k) x+i \beta_n(k) y} +  \frac{I_{n,k}^-[\psi]}{\beta_n(k)} e^{i\alpha_n(k) x-i \beta_n(k) y},
% \end{equation}
% where $e_n(k)=(1-e^{i\beta_n(k) h})^j -1$. 
for $-h<Y<0$.  For $j\geq 2$ we thus obtain
\begin{equation}
\label{RayleighAbove}
v_{k_0}(x,y)= \frac{i}{2L}\sum\limits_{n\not \in \mathcal{N}_\mathrm{wa}} \frac{I_{n,k_0}^+[\psi]}{\beta_n(k)} e^{i\alpha_n(k) x+i \beta_n(k) y}
\end{equation}
in the region $\{y>M^+\}$ and
\begin{equation}
\label{RayleighBelow}
v_{k_0}(x,y)= \frac{i}{2L}\sum\limits_{n\not \in \mathcal{N}_\mathrm{wa}} \frac{I_{n,k_0}^-[\psi]}{\beta_n(k_0)} e^{i\alpha_n(k_0) x-i \beta_n(k_0) y} + \frac{1}{L}\sum\limits_{n  \in \mathcal{N}_\mathrm{wa}} I_{n,k_0}^+[\psi] y e^{i\alpha_n(k_0) x}  - \tilde{I}_{n,k_0}[\psi] e^{i\alpha_n(k_0) x}
\end{equation}
in the region $\{y<M^-\}$ (see Figure~\ref{problemGeometry}). Here,
the functional $\tilde{I}_{n,k_0}$ is defined in
equation~\eqref{eq:functionalITilde}; note, further, that for $n \in
\mathcal{N}_\mathrm{wa}$ we have $I^+_{n,k_0}=I^-_{n,k_0}$.

In the case $j=1$, finally, we obtain
\begin{equation}
\label{RayleighAbove_1}
v_{k_0}(x,y)= \frac{i}{2L}\sum\limits_{n\not \in \mathcal{N}_\mathrm{wa}} \frac{I_{n,k_0}^+[\psi]}{\beta_n(k)} e^{i\alpha_n(k) x+i \beta_n(k) y} +\frac{h}{2L}\sum\limits_{n \in \mathcal{N}_\mathrm{wa}} I^+_{n,k_0}[\psi] e^{i\alpha_n(k_0) x }
\end{equation}
in the region $\{y>M^+\}$ and
\begin{equation}
\label{RayleighBelow_1}
v_{k_0}(x,y)= \frac{i}{2L}\sum\limits_{n\not \in \mathcal{N}_\mathrm{wa}} \frac{I_{n,k_0}^-[\psi]}{\beta_n(k_0)} e^{i\alpha_n(k_0) x-i \beta_n(k_0) y} + \frac{1}{L}\sum\limits_{n  \in \mathcal{N}_\mathrm{wa}} I_{n,k_0}^+[\psi] y e^{i\alpha_n(k_0) x}  +\left(\frac{h}{2} I_{n,k_0}^+[\psi]- \tilde{I}_{n,k_0}[\psi]\right) e^{i\alpha_n(k_0) x}
\end{equation}
in the region $\{y<M^-\}$ (see Figure~\ref{problemGeometry}).

\begin{remark}\label{rem:invert_Ak}
  From equations~\eqref{RayleighBelow} and~\eqref{RayleighBelow_1} we
  see that that the potential $v_{k_0}$ could in principle contain
  unbounded modes (that grow linearly with $y$) in the region
  $\{y<M^-\}$---and, thus, the solution $v_{k_0}$ could in principle
  fail to satisfy the radiation condition put forth in
  Remark~\ref{rem:Radiating}. The unbounded modes are not present in
  the expansions~\eqref{RayleighBelow} and~\eqref{RayleighBelow_1}
  provided the density $\psi$ satisfies the conditions
  $I_{n,k_0}^+[\psi]=0$ for all $n \in \mathcal{N}_\mathrm{wa}$.  But,
  in view of equation~\eqref{eq:d_coeffs} and the parenthetical
  comment that follows~\eqref{eq:finite_lc}, equation~\eqref{lim_plus}
  tells us that $I_{n_j,k}^+$ ($j=1,\dots,r$) vanishes for
  $k=k_0$---since so does the denominator on the left hand side of
  that equation. Since $n_j$ ($j=1,\dots,r$) is an enumeration of $
  \mathcal{N}_\mathrm{wa}$ we see that the density $\psi_{k_0}$
  obtained at a Wood anomaly in Theorem~\ref{thm:Woodbury} indeed
  satisfies the condition $I_{n,k_0}^+[\psi]=0$ for all $n \in
  \mathcal{N}_\mathrm{wa}$. It follows that the corresponding
  potential $v_{k_0}$ (defined in terms of $\psi_{k_0}$) is a
  radiating potential both above and below the obstacles.
\end{remark}
\begin{remark}\label{rem:non-radiating}
  In connection with the previous remark we note that the functionals
  $I_{n,k_0}^+[\psi]$ ($n \in \mathcal{N}_\mathrm{wa}$) do not
  identically vanish: e.g., setting $\psi = e^{i\alpha_n x}$
  in~\eqref{InPlus} yields a non-zero value, since the integral
  involving the normal derivative does vanish. It follows that, for a
  given (arbitrary) density $\psi$, the potential $v_{k_0}$ does
  not generally satisfy the radiation condition and, therefore, as
  indicated in Section~\ref{sec:Woodbury}, classical arguments based
  on uniqueness of radiating solution for the associated PDE problem
  are not immediately applicable to the study of uniqueness of the
  operator $A_{k_0}$ with $k_0\in\mathcal{K}_\mathrm{wa}$.
\end{remark}
\section{Invertibility of $\tilde{R}_{k_0} A_{k_0}^{-1}$ }
\label{app:invertibility}
Using the representations~\eqref{RayleighAbove}
and~\eqref{RayleighBelow} (in the case $j\geq 2$)
or~\eqref{RayleighAbove_1} and~\eqref{RayleighBelow_1} (in the case
$j=1$) we can now deduce the invertibility of the restriction of the
operator $\tilde{R}_{k_0}A_{k_0}^{-1}$ to the finite-dimensional space
$S$ (see Theorem~\ref{thm:Woodbury}) provided uniqueness of solution
of the problem~\eqref{PDE}--\eqref{TE} holds for $k=k_0$. Indeed, for $f \in S$ such
that $\tilde{R}_{k_0}A_{k_0}^{-1} f =0$, in view of
equation~\eqref{RDirichlet2} we must have
$I_{n,k_0}^+[A_{k_0}^{-1}f]=0$ for all $n \in
\mathcal{N}_{\mathrm{wa}}$. In view of Remark~\ref{rem:invert_Ak},
then, the potential $v_{k_0}$ defined in~\eqref{eq:potVk_0} with
$\psi=A_{k_0}^{-1}f$ is a radiating solution of the Helmholtz
equation. Moreover, using the jump-relations of the single and double
layer potentials it follows that $v_{k_0}$ equals $f$ along the
boundary of $D_0$. But, since $f\in S$,
\begin{equation}
f =\sum \limits_{n\in \mathcal{N}_{\mathrm{wa}}} C_n e^{i\alpha_n(k) x},
\end{equation}
it follows (assuming the uniqueness of radiating solutions for the PDE
problem for $k=k_0$) that
\begin{equation}
  v_k(x,y) = \sum \limits_{n\in \mathcal{N}_{\mathrm{wa}}} C_n e^{i\alpha_n(k) x}
\end{equation} 
in the complete exterior domain $\Omega=\RR^2 \setminus D$. Since
$I_{n,k_0}^+[\psi]=0$, $n\in \mathcal{N}_{\mathrm{wa}}$ then $v_k$
does not contain the Wood modes in the region $y>M^+$ (see
equations~\eqref{RayleighAbove} or~\eqref{RayleighAbove_1} in the
respective cases $j\geq 2$ and $j=1$) and therefore $C_n=0$ for all
$n\in \mathcal{N}_\mathrm{wa}$. In other words, the operator
$\tilde{R}_{k_0}A_{k_0}^{-1}:S \to S$ in finite-dimensional vector
space $S$ has trivial null space, and it must therefore be invertible,
as desired.  

\bibliographystyle{plain} 
\bibliography{references}

\end{document}